\newtheorem{theorem}{Theorem}[section]
\newtheorem{lemma}[theorem]{Lemma}
\newenvironment{proof}{\noindent{\bf Proof.~}}
{{\mbox{}\hfill {\small \fbox{}}\\}}
\def\NN{\mathbb{N}}
\def\RR{\mathbb{R}}
\def\ZZ{\mathbb{Z}}
\def\inttriple {\int\mbox{\hspace{-3mm}}\int\mbox{\hspace{-3mm}}\int}
\def\intquad {\int\mbox{\hspace{-3mm}}\int\mbox{\hspace{-3mm}}\int\mbox{\hspace{-3mm}}\int}
\def\ds{\displaystyle}
\def\ms{\medskip}
\def\ss{\smallskip}
\def\eps{\varepsilon}
\def\bar#1{{\overline #1}}
\def\pa{\partial}
\def\ud{\ \mathrm d}
\def\calP{{\mathcal P}}
\def\sgn{\ \mathrm{sgn}}
\begin{document}

\title{Relaxation limit of the aggregation equation with pointy potential}

\author{Beno\^it Fabr\`eges\footnote{Univ Lyon, Université Claude Bernard Lyon 1, CNRS UMR 5208, Institut Camille Jordan, 43 blvd. du 11 novembre 1918, F-69622 Villeurbanne cedex, France.} , Fr\'ed\'eric Lagouti\`ere$^{*}$, S\'ebastien Tran Tien$^{*}$, and Nicolas Vauchelet\footnote{Laboratoire Analyse, G\'eom\'etrie et Applications CNRS UMR 7539, Universit\'e Sorbonne Paris Nord, Villetaneuse, France.}
}

\maketitle

\begin{abstract}
  This work is devoted to the study of a relaxation limit of the so-called aggregation equation with a pointy potential in one dimensional space.
  The aggregation equation is by now widely used to model the dynamics of a density of individuals attracting each other through a potential. When this potential is pointy, solutions are known to blow up in final time. For this reason, measure-valued solutions have been defined.
  In this paper, we investigate an approximation of such measure-valued solutions thanks to a relaxation limit in the spirit of Jin and Xin.
  We study the convergence of this approximation and give a rigorous estimate of the speed of convergence in one dimension with the Newtonian potential. We also investigate the numerical discretization of this relaxation limit by uniformly accurate schemes.
\end{abstract}

\textbf{Keywords:} Aggregation equation; Relaxation limit; Scalar conservation law; Finite volume scheme.

\section{Introduction}

The so-called aggregation equation has been widely used to model the dynamics of a population of individuals in interaction.
Let $W: \mathbb{R} \rightarrow \mathbb{R}$, sufficiently smooth, be the interaction potential governing the population.
Then, in one dimension in space, the dynamics of the density of individuals, denoted by $\rho$, is governed by the following equation, for $t>0$ and $x\in\RR$,
\begin{equation}\label{eq:aggreg}
  \pa_t \rho + \pa_x(a[\rho] \rho) = 0, \qquad \text{with }\quad a[\rho] = - W'*\rho.
\end{equation}
Such equations appear in many applications in population dynamics: For instance to describe the collective migration of cells by swarming, the motion of bacteria by chemotaxis, the crowd motion, the flocking of birds, or fishes school see e.g. \cite{Morale, Burger, BurgerMorale,TopazBertozzi1,TopazBertozzi2,DolakSchmeiser,JamesVauchelet}.
From a mathematical point of view, these equations have been widely studied. When the potential $W$ is not smooth enough, it is known that weak solutions may blow up in finite time \cite{Andreaaaa,Andreaaaaaa}.
Thus, the existence of weak (measure) solutions has been investigated in e.g. \cite{Carrillo1,Carrillo(2016)}.

In this paper, we consider a relaxation limit in the spirit of Jin-Xin \cite{JinXin} of the aggregation equation in one space dimension on $\RR$.
It is now well-established that such modifications allow to regularize the solutions.
For a given $c> \| a \|_\infty$, we introduce the system
\begin{subequations}\label{eq:agg_relax}
  \begin{align}
    & \pa_t\rho + \pa_x \sigma = 0,   \label{eq:rho} \\
    & \pa_t \sigma + c^2 \pa_x \rho = \frac{1}{\eps} (a[\rho] \rho - \sigma)   \label{eq:p} \\
    & a[\rho] = - W' * \rho  \label{eq:arho}
  \end{align}
\end{subequations}
This system is complemented with initial data $\rho_0$ and $\sigma_0 := a[\rho_0] \rho_0$.
It is clear, at least formally, that when $\eps\to 0$ the solution $\rho$ of system \eqref{eq:agg_relax} converges to the one of the aggregation equation \eqref{eq:aggreg} (and actually it is true only if $c > \| a \|_\infty$).
We mention that the aggregation equation may also be derived thanks to a hydrodynamical limit of kinetic equations \cite{DolakSchmeiser,JamesVauchelet,JV_sinum}.

The aim of this work is to study the convergence as $\eps\to 0$ of the relaxation system \eqref{eq:agg_relax} towards the aggregation equation. More precisely, we establish a precise estimate of the speed of convergence, and we also illustrate with some numerical simulations.
These estimates are obtained only in the case of the Newtonian potential in one dimension $W(x) = \frac 12 |x|$.
Indeed, in this particular case we may link the aggregation equation to a scalar conservation law \cite{Bonaschi,JV_dcds}. The same link holds for the relaxation system \eqref{eq:agg_relax}: denoting
$$
u(t,x) = \frac 12 - \int_{-\infty}^x \rho(t,dy), \qquad
v(t,x) = \frac 12 - \int_{-\infty}^x \sigma(t,dy),
$$
where the notation $\int \rho(t,dy)$ stands for the integral with respect to the probability measure $\rho(t)$, then we verify easily that
$$
u = - W'*\rho, \qquad \rho = - \pa_x u,
$$
so that $a[\rho] = u$.
Then, integrating \eqref{eq:agg_relax}, we deduce that $(u,v)$ is a solution to
\begin{subequations}\label{eq:uv}
  \begin{align}
    & \pa_t u + \pa_x v = 0  \label{eq:u}  \\
    & \pa_t v + c^2 \pa_x u = \frac{1}{\eps} \Big(\frac 12 u^2 - v\Big),   \label{eq:v}
  \end{align}
\end{subequations}
which is complemented with initial data $u_0 = \frac 12 - \int_{-\infty}^x \rho_0(dy)$, and $v_0 = \frac 12 - \int_{-\infty}^x \sigma_0(dy)$.
Clearly, as $\eps\to 0$, we expect that the solution of the above system converges to the solution of the following Burgers equation
$$
\pa_t u + \frac 12 \pa_x u^2 = 0.
$$
Introducing the quantities $a=v-cu$ and $b=v+cu$, \eqref{eq:uv} is equivalent to the diagonalized system
\begin{subequations}\label{eq:ab}
  \begin{align}
    &\pa_t a - c\pa_x a = \frac{1}{\eps} \Big(\frac 12 \Big(\frac{b-a}{2c}\Big)^2 - \frac{a+b}{2}\Big)  \label{eq:a}  \\
    &\pa_t b + c\pa_x b = \frac{1}{\eps} \Big(\frac 12 \Big(\frac{b-a}{2c}\Big)^2 - \frac{a+b}{2}\Big).  \label{eq:b}
  \end{align}
\end{subequations}
We will adapt the techniques developed in \cite{Katsoulakis(1997)} to obtain convergence estimates for our system.

In order to illustrate this convergence result, numerical discretizations of the relaxation system \eqref{eq:agg_relax} are investigated. The schemes we propose are such that they are uniform with respect to $\eps$, that is they satisfy the so-called asymptotic preserving (AP) property \cite{ShiJin}.
Therefore, such schemes in the limit $\eps\to 0$ must be consistent with the aggregation equation.
Numerical simulations of solutions of the aggregation equation for pointy potentials have been studied by several authors see e.g. \cite{JV_sinum,Chertock,Craig,Carrillo(2016),Gosse(2016),Fabreges(2019),secondorder}.
In particular, some authors pay attention to recover the correct behavior of the numerical solutions after the blow-up time. To do so, particular attention must be paid to the definition of the product $a[\rho] \rho$ when $\rho$ is a measure.

In this article, we propose two discretizations of the relaxation system which satisfy the AP property. In a first approach, we propose a simple splitting algorithm where we split the transport part and the right hand side in system \eqref{eq:agg_relax}.
It results in a numerical scheme which is very simple to implement and for which we verify easily the AP property.
The second approach relies on a well-balanced discretization in the spirit of \cite{Gosse_book,Gosse(2016)}.
This scheme is more expensive to implement than the first scheme, but its numerical solution has less diffusion, as it is illustrated by our numerical results.

The outline of the paper is the following.
In section \ref{sec:conv}, after recalling some useful notations, we prove our main result: an estimation of the speed of convergence in the Wasserstein $W_1$ distance with respect to $\eps$ of the solutions of the relaxation system \eqref{eq:agg_relax} towards the solution of the aggregation equation \eqref{eq:aggreg} in the case $W(x)=\frac 12 |x|$.
The numerical discretization is investigated in section \ref{sec:num}.
Two numerical schemes verifying the AP property are proposed. The first scheme is based on a splitting algorithm, whereas the second scheme relies on a well-balanced discretization.
Numerical results and comparisons are provided in section \ref{sec:numresult}.

\section{Convergence result}\label{sec:conv}

\subsection{Notations}

Before stating and proving our main results, we first recall some useful notations and results.
Since we are dealing with conservation laws (in which the total mass is conserved), we will work in some space of probability measures, namely the Wasserstein space of order $p\geq 1$, which is the space of probability measures with finite order $p$ moment:
    $$
\calP_p(\RR^N) = \left\{\mu \mbox{ nonnegative Borel measure}, \mu(\RR^N)=1, \int |x|^p \mu(dx) <\infty\right\}.
    $$
This space is endowed with the Wasserstein distance defined by (see e.g. \cite{Villani, Santambrogio})
\begin{equation}
  \label{defWp}
  W_p(\mu,\nu)= \inf_{\gamma\in \Gamma(\mu,\nu)} \left\{\int |y-x|^p\,\gamma(dx,dy)\right\}^{1/p},
\end{equation}
where $\Gamma(\mu,\nu)$ is the set of measures on $\RR^N\times\RR^N$ with marginals $\mu$ and $\nu$, i.e.
$$
\begin{array}{c}
  \Gamma(\mu,\nu) = \displaystyle\left\{\gamma\in \calP_p(\RR^N\!\times\!\RR^N); \
  \forall\, \xi\in C_0(\RR^N),
\int_{\RR^{2N}} \xi(y_0)\gamma(dy_0,dy_1) = \int_{\RR^N} \xi(y_0) \mu(dy_0), \right.\\[2mm]
\displaystyle\left.\int_{\RR^{2N}} \xi(y_1)\gamma(dy_0,dy_1) = \int_{\RR^N} \xi(y_1) \nu(dy_1) \right\},
\end{array}
$$
with $C_0(\RR^N)$ the set of continuous functions on $\RR^N$ that vanish at infinity.
From a simple minimization argument, we know that in the definition of $W_p$
the infimum is actually a minimum. A map that realizes the minimum in the definition \eqref{defWp} of $W_p$ is called an optimal transport plan, the set of which is denoted by $\Gamma_0(\mu,\nu)$.

In the one-dimensional framework, we may simplify these definitions.
Indeed any probability measure $\mu$ on the real line $\RR$ can be described in term of its cumulative distribution function $F_\mu(x)=\mu((-\infty,x))$ which is a right-continuous and nondecreasing function with $F_\mu(-\infty)=0$ and $F_\mu(+\infty)=1$.
Then we can define the generalized inverse $F_\mu^{-1}$ of $F_\mu$ (or monotone rearrangement of $\mu$) by $F_\mu^{-1}(z):=\inf\{x\in \RR / F_\mu(x)>z\}$, it is a right-continuous and nondecreasing function as well, defined on $[0,1]$.
We have for every nonnegative Borel map $\xi$,
$$
\int_\RR \xi(x) \mu(dx) = \int_0^1 \xi(F_\mu^{-1}(z))\,dz.
$$
In particular, $\mu\in \calP_p(\RR)$ if and only if $F_\mu^{-1}\in L^p(0,1)$.
Moreover, in the one-dimensional setting, there exists a unique optimal transport plan realizing the minimum in \eqref{defWp}.
More precisely, if $\mu$ and $\nu$ belong to $\calP_p(\RR)$, with monotone rearrangements $F_\mu^{-1}$ and $F_\nu^{-1}$, then $\Gamma_0(\mu,\nu)=\{(F_\mu^{-1},F_\nu^{-1})_\# {\mathbb{L}}_{(0,1)}\}$ where ${\mathbb{L}}_{(0,1)}$ is the restriction of the Lebesgue measure on $(0,1)$.
Thus we have the explicit expression of the Wasserstein distance (see \cite{Vallender(1974),Rachev,Villani})
\begin{equation}\label{dWF-1}
  W_p(\mu,\nu) = \left(\int_0^1 |F_\mu^{-1}(z)-F_\nu^{-1}(z)|^p\,dz\right)^{1/p},
\end{equation}
and the map $\mu \mapsto F_\mu^{-1}$ is an isometry between $\calP_p(\RR)$ and the convex subset of (essentially) nondecreasing functions of $L^p(0,1)$.

\subsection{Convergence estimates}

Let us first consider the limit $\eps \to 0$ for the system \eqref{eq:uv}. Compactness methods have been used in \cite{Natalini(1996)} to get $L^1_{loc}$ convergence in space. However, in order to pass to the aggregation equation, one may want global $L^1$ convergence, which we prove in the following theorem, along the lines of \cite{Katsoulakis(1997)}:

\begin{theorem}\label{thm:cvgu}
Let $u_0 \in L^\infty \cap BV(\RR)$, $c > \|u_0\|_{L^\infty}$ and set $v_0 = \frac{u_0^2}{2}$. There exists a constant $C > 0$ such that, for any $\eps > 0$, denoting by $(u^\eps, v^\eps)$ the solution to \eqref{eq:uv} with initial data $(u_0, v_0)$, the following estimate holds:
$$
\forall T > 0, \qquad \|u(T) - u^\eps(T)\|_{L^1} \leq C TV(u_0) (\sqrt{\eps T} + \eps),
$$
where $u$ is the entropy solution to the Burgers equation with initial datum $u_0$.
\end{theorem}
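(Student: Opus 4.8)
The plan is to follow the relaxation-entropy framework of Katsoulakis--Tzavaras \cite{Katsoulakis(1997)}, combined with a Kuznetsov-type doubling of variables to extract the explicit rate. Since $v_0 = \frac12 u_0^2 = f(u_0)$ with $f(u) = \frac12 u^2$, the initial data lie exactly on the equilibrium manifold, so I expect no initial-layer contribution and hence a clean rate without an $O(1)$ defect at $t=0$.

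First I would establish uniform (in $\eps$) a priori bounds on $(u^\eps, v^\eps)$. Working with the diagonalized system \eqref{eq:ab} in the Riemann invariants $a = v - cu$ and $b = v + cu$, which are transported at speeds $\mp c$, the subcharacteristic condition $c > \|u_0\|_{L^\infty}$ lets me exhibit an invariant region and derive a maximum principle yielding $\|u^\eps(t)\|_{L^\infty} \le \|u_0\|_{L^\infty}$ for all $t > 0$. Differentiating \eqref{eq:ab} in $x$ and exploiting the same structure, I would then propagate a uniform bound $TV(u^\eps(t)) + TV(v^\eps(t)) \le C\, TV(u_0)$. This is the step where the subcharacteristic condition is essential, since it is precisely what prevents the total variation from amplifying as $\eps \to 0$; these bounds, being $\eps$-independent, also upgrade the local well-posedness of the semilinear system to global existence.

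Next I would quantify the distance to equilibrium through the relaxation defect $r^\eps := \tfrac12 (u^\eps)^2 - v^\eps$. Rewriting \eqref{eq:v} as $r^\eps = \eps\big(\partial_t v^\eps + c^2 \partial_x u^\eps\big)$ and using the BV bounds (controlling $\partial_t v^\eps$ via the equations), I expect the time-integrated estimate $\int_0^T \|r^\eps(t)\|_{L^1}\, dt \le C\,\eps\, TV(u_0)\,(1+T)$. Substituting $v^\eps = f(u^\eps) - r^\eps$ into \eqref{eq:u} shows that $u^\eps$ solves $\partial_t u^\eps + \partial_x f(u^\eps) = \partial_x r^\eps$, and consequently, for every Kruzhkov entropy pair $\big(|u-k|,\ \sgn(u-k)(f(u)-f(k))\big)$, the function $u^\eps$ satisfies an approximate entropy inequality whose defect is controlled by $r^\eps$ together with the (nonnegative) relaxation dissipation. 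Constructing the relaxation entropies for the $2\times 2$ system and showing that they degenerate correctly onto the Kruzhkov family in the limit is, together with the uniform BV bound, the technical heart of the argument, and the step I anticipate to be the main obstacle.

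Finally I would run a Kuznetsov doubling-of-variables estimate comparing $u^\eps$ with the entropy solution $u$ of the Burgers equation. Introducing mollification scales in space and time and pairing the approximate entropy inequality for $u^\eps$ against the Kruzhkov inequality for $u$, the accumulated error takes the schematic form $\|u^\eps(T) - u(T)\|_{L^1} \le C\, TV(u_0)\,\big(\beta + \eps T/\beta + \eps\big)$, where $\beta$ is the temporal mollification scale. The residual $O(\eps)$ reflects the direct, non-mollified contribution of the flux defect $r^\eps$, while balancing $\beta \sim \sqrt{\eps T}$ in the first two terms produces the announced rate $C\, TV(u_0)\,(\sqrt{\eps T} + \eps)$. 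The precise bookkeeping in Kuznetsov's inequality that cleanly isolates the $\sqrt{\eps T}$ scaling from the $O(\eps)$ term is the final delicate point.
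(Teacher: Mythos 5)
Your overall scaffolding --- Riemann invariants, invariant regions under the subcharacteristic condition, uniform BV bounds, $O(\eps)$ smallness of the relaxation defect thanks to the well-prepared data $v_0 = f(u_0)$, and a Kuznetsov doubling optimized at scale $\sqrt{\eps T}$ --- is the same as the paper's, which follows Katsoulakis--Tzavaras. But there is a genuine gap exactly at the step you yourself flag as the technical heart: the mechanism you propose for the approximate entropy inequalities does not work. From $\pa_t u^\eps + \pa_x f(u^\eps) = \pa_x r^\eps$, multiplying by $\sgn(u^\eps-\kappa)$ gives the defect $\sgn(u^\eps-\kappa)\,\pa_x r^\eps$, and this is \emph{not} ``controlled by $r^\eps$'': it is not the divergence of an $L^1$-small quantity. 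Integrating by parts to move the derivative onto a mollifier produces the term $r^\eps\,\pa_x\!\left[\sgn(u^\eps-\kappa)\right]$, the product of an $L^1$ function with a measure supported where $u^\eps$ crosses $\kappa$; the bound $\|r^\eps(t)\|_{L^1}=O(\eps)$ cannot control it, and a pointwise bound $\|r^\eps\|_{L^\infty}=O(\eps)$ is unavailable for BV data. Smoothing the sign does not help either: one then faces $\frac1\delta\int |r^\eps|\,|\pa_x u^\eps|$, again a product of two merely-$L^1$ quantities. (A smaller issue: bounding $\pa_t v^\eps$ ``via the equations'' is circular as stated, since that equation contains $\frac1\eps r^\eps$; the non-circular route is $L^1$-contraction applied to time translates plus well-preparedness, or a Gronwall argument using the strict dissipativity of the source.)

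The paper avoids this obstruction by never writing Kruzhkov inequalities for $u^\eps$ itself. It compares the Riemann invariants $(a^\eps,b^\eps)$ with constant \emph{equilibrium} states $(k,h(k))$, where $h(a) = a + 2c^2 - 2c\sqrt{c^2+2a}$ satisfies $G(k,h(k))=0$, and uses the monotonicity of $G$ (guaranteed by the subcharacteristic condition) to get the exact inequalities \eqref{entropie:ab}. It then works with the equilibrium projection $w^\eps = \frac{h(a^\eps)-a^\eps}{2c}$ in place of $u^\eps$: for $w^\eps$ the Kruzhkov defect comes out in \emph{conservative form}, $\frac{1}{2c}\big(\pa_t r_1^\eps + c\,\pa_x r_2^\eps\big)$ with $|r_i^\eps|\le C|G(a^\eps,b^\eps)|$ and $\|G(a^\eps,b^\eps)(t)\|_{L^1}\le C\,\eps\,\big(TV(a_0)+TV(b_0)\big)$ by Gronwall. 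Because the defect is a space-time divergence of $L^1$-small terms, the doubling argument can transfer the derivatives onto the mollifiers, yielding the remainder $C\eps\big(T/\alpha+T/\beta+1\big)$ that optimizes to $\sqrt{\eps T}+\eps$; the statement for $u^\eps$ then follows from $|u^\eps - w^\eps| \le \frac{1}{2c}|b^\eps - h(a^\eps)| \le C|G(a^\eps,b^\eps)|$. These objects --- $h$, $w^\eps$, and the divergence-form remainders $r_1^\eps, r_2^\eps$ --- are precisely what is missing from your plan, and without them the doubling estimate cannot be started.
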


\begin{proof}
Denote $(a^\eps, b^\eps)$ the solution to \eqref{eq:ab}, and $G(a,b) = \frac 12 \Big( \frac{b-a}{2c} \Big)^2 - \frac{a+b}{2}$.

So as to obtain entropy inequalities on $(a^\eps, b^\eps)$, we need monotonicity properties on $G$. One can check that $G(a^\eps,b^\eps)$ is decreasing with respect to $a^\eps$ and $b^\eps$ if the so-called subcharacteristic condition $|u^\eps| < c$ holds. Up to a slight modification of the nonlinear term $f(u^\eps) = \frac{(u^\eps)^2}{2}$ in \eqref{eq:uv}, which does not affect the value of $(a^\eps, b^\eps)$:

$$
f(u) :=
\begin{cases}
- \|u_0\| u - \dfrac{\|u_0\|^2}{2}, \quad & \text{if } u \leq -\|u_0\|, \\
\dfrac{u^2}{2}, \quad & \text{if } - \|u_0\| \leq u \leq \|u_0\|, \\
\|u_0\| u - \dfrac{\|u_0\|^2}{2}, \quad & \text{if } \|u_0\| \leq u,
\end{cases}
$$
the choice $c > \|u_0\|_{L^\infty}$ ensures that the subcharacteristic condition and the bound $\|u^\eps(t)\|_{L^\infty} \leq \|u_0\|_{L^\infty}$ hold for all time.

\ms

Now, obtaining entropy inequalities on $(a^\eps, b^\eps)$ consists in making a comparison with constant state solutions to \eqref{eq:ab}. Namely, letting $m = \|u_0\|_{L^\infty} \Big( \frac{\|u_0\|_{L^\infty}}{2} - c \Big)$, $M = \|u_0\|_{L^\infty} \Big( \frac{\|u_0\|_{L^\infty}}{2} + c \Big)$ and $h(a) = a + 2c^2 - 2c \sqrt{c^2 + 2a}$, we have $G(k, h(k)) = 0$ for all $k \in [m, M]$, and therefore $(k, h(k))$ is a solution to \eqref{eq:ab}. Thus the following system holds:

\begin{subequations}\label{eq:kh}
  \begin{align}
    & \pa_t (a^\eps - k) - c \pa_x (a^\eps - k) = \frac{1}{\eps} \Big( G(a^\eps, b^\eps) - G(k,h(k)) \Big),  \label{eq:k}  \\
    & \pa_t (b^\eps - h(k)) + c \pa_x (b^\eps - h(k)) = \frac{1}{\eps} \Big( G(a^\eps, b^\eps) - G(k,h(k)) \Big).  \label{eq:h}
  \end{align}
\end{subequations}
Multiplying \eqref{eq:k} by $\sgn(a^\eps - k)$, \eqref{eq:h} by $\sgn(b^\eps - h(k))$ and summing yields:
\begin{align*}
\pa_t \Big( |a^\eps - k| + |b^\eps - h(k)| \Big) - c \pa_x \Big( |a^\eps - k| - |b^\eps - h(k)| \Big) = \frac{1}{\eps} \Big( & \sgn(a^\eps - k) + \sgn(b^\eps - h(k)) \Big) \times \\
& \Big( G(a^\eps, b^\eps) - G(k,h(k)) \Big).
\end{align*}
Hence, using the monotonicity of $G$ we get the following entropy inequalities on $(a^\eps, b^\eps)$:
\begin{equation}\label{entropie:ab}
\pa_t \Big( |a^\eps - k| + |b^\eps - h(k)| \Big) - c \pa_x \Big( |a^\eps - k| - |b^\eps - h(k)| \Big) \leq 0.
\end{equation}
We now turn to proving entropy inequalities on $u^\eps$. Straightforward computations yield the existence of a constant $C > 0$ such that, for all $a,b \in [m, M]$, one has $|h(a) - b| \leq C |G(a,b)|$. We therefore work on the variable $w^\eps := \frac{h(a^\eps) - a^\eps}{2c}$ in the first place. Let $\kappa \in \big[ -\|u_0\|_{L^\infty}, \|u_0\|_{L^\infty} \big]$, and $k \in [m, M]$ such that $\kappa = \frac{h(k) - k}{2c}$. We have:
\begin{equation}\label{eq:w1}
|w^\eps - \kappa| = \frac{1}{2c} \Big( |h(a^\eps) - h(k)| + |a^\eps - k| \Big) = \frac{1}{2c} \Big( |a^\eps - k| + |b^\eps - h(k)| + r_1^\eps \Big),
\end{equation}
where $r_1^\eps = |h(a^\eps) - h(k)| - |b^\eps - h(k)|$ verifies $|r_1^\eps| \leq |h(a^\eps) - b^\eps| \leq C |G(a^\eps, b^\eps)|$. Thus, we are left to control $|G(a^\eps, b^\eps)|$. To do so, we formally differentiate this quantity and use \eqref{eq:ab}:
\begin{align*}
\pa_t |G(a^\eps, b^\eps)| & =  \Big( \pa_t a^\eps \pa_a G(a^\eps, b^\eps) + \pa_t b^\eps \pa_b G(a^\eps, b^\eps) \Big) \sgn(G(a^\eps, b^\eps)), \\
& = \frac{1}{\eps} \Big( \pa_a G(a^\eps, b^\eps) + \pa_b G(a^\eps, b^\eps) \Big) |G(a^\eps, b^\eps)| - c \sgn(G(a^\eps, b^\eps)) \Big( \pa_x a^\eps \pa_a G(a^\eps, b^\eps) + \pa_x b^\eps \pa_b G(a^\eps, b^\eps) \Big), \\
& \leq \frac{1}{\eps} \sup_{[m, M]^2} \Big(\pa_a G + \pa_b G \Big) |G(a^\eps, b^\eps)| + c \sup_{[m, M]^2} \Big(|\pa_a G| + |\pa_b G| \Big) \Big(|\pa_x a^\eps| + |\pa_x b^\eps| \Big).
\end{align*}
Integrating in space gives:
$$
\frac{\ud}{\ud t} \|G(a^\eps, b^\eps)\|_{L^1} \leq - \frac{A}{\eps} \|G(a^\eps, b^\eps)\|_{L^1} + B \Big( TV(a_0) + TV(b_0) \Big),
$$
where $A = -\sup_{[m, M]^2} (\pa_a G + \pa_b G)$ and $B = c \sup_{[m, M]^2} (|\pa_a G| + |\pa_b G|)$ are positive constants which do not depend on $\eps$ nor on time. A Gronwall lemma then gives:
\begin{equation}\label{eq:G}
\|G(a^\eps(t), b^\eps(t))\|_{L^1} \leq C \Big( TV(a_0) + TV(b_0) \Big) \eps,
\end{equation}
where we still denote $C = B/A$ a constant independent of time and of $\eps$.

Besides, since, $G(a, h(a)) = 0$, one has $\frac{1}{2} \left(\frac{h(a)-a}{2c}\right)^2 = \frac{1}{2} (h(a) + a)$ and therefore:
\begin{align}\label{eq:w2}
\sgn(w^\eps - \kappa) \left(\frac{(w^\eps)^2}{2} - \frac{\kappa^2}{2} \right) & = \frac{1}{2} \sgn \Big(h(a^\eps) - h(k) - (a^\eps - k) \Big) \Big( h(a^\eps) + a^\eps - (h(k) + k) \Big), \nonumber \\
& = \frac{1}{2} \Big( |h(a^\eps) - h(k)| - |a^\eps - k| \Big), \nonumber \\
& = \frac{1}{2} \Big( |b - h(k)| - |a^\eps - k| + r_2^\eps \Big),
\end{align}
with $|r_2^\eps| \leq C |G(a^\eps, b^\eps)|$. Differentiating \eqref{eq:w1} in time and \eqref{eq:w2} in space, and using \eqref{entropie:ab} thus yields:
\begin{equation}\label{entropie:w}
\pa_t |w^\eps - \kappa| + \pa_x \sgn(w^\eps - \kappa) \left(\frac{(w^\eps)^2}{2} - \frac{\kappa^2}{2} \right) \leq \frac{1}{2c} \Big( \pa_t r_1^\eps + c \pa_x r_2^\eps \Big).
\end{equation}
Then, we estimate $\|u(t) - w^\eps(t)\|_{L^1}$ using Kuznetsov's doubling of variables technique (see e.g. \cite{Serre(1999)} for scalar conservation laws with viscosity and \cite{Bouchut(1998)} for a more general formalism) in order to combine \eqref{entropie:w} with Kruzkov inequalities on the entropy solution $u$, that read:
\begin{equation}\label{entropie:u}
\pa_t |u - \kappa| + \pa_x \sgn(u - \kappa) \left(f(u) - f(\kappa)\right) \leq 0.
\end{equation}
Writing respectively \eqref{entropie:u} at point $(s,x)$ for $\kappa = w^\eps(t,y)$ and \eqref{entropie:w} at point $(t,y)$ for $\kappa = u(s,x)$, we get:
\begin{subequations}\label{doubling}
\begin{align}
& \pa_s |u(s,x) - w^\eps(t,y)| + \pa_x \sgn(u(s,x) - w^\eps(t,y)) \left(\frac{u(s,x)^2}{2} - \frac{(w^\eps(t,y))^2}{2} \right) \leq 0, \label{doubling1} \\
& \pa_t |w^\eps(t,y) - u(s,x)| + \pa_y \sgn(w^\eps(t,y) - u(s,x)) \left(\frac{(w^\eps(t,y))^2}{2} - \frac{u(s,x)^2}{2} \right) \leq \frac{1}{2c} \Big( \pa_t r_1^\eps(t,y) + c \pa_y r_2^\eps(t,y) \Big). \label{doubling2}
\end{align}
\end{subequations}
Now, let $\omega_\alpha(t) = \frac{1}{\alpha} \omega \left(\frac{t}{\alpha}\right)$ and $\Omega_\beta(x) = \frac{1}{\beta} \Omega \left(\frac{x}{\beta}\right)$ be two mollyfing kernels. Setting $g(s, t, x, y) = \omega_\alpha(s - t) \Omega_\beta(x - y)$ and testing \eqref{doubling1} and \eqref{doubling2} against $g(\cdot, t, \cdot, y) \mathbf{1}_{[0,T]}$ and $g(s, \cdot, x, \cdot) \mathbf{1}_{[0,T]}$ respectively, and integrating over $[0,T] \times \RR$, we get on the one hand:
\begin{align}\label{doubling3}
& \intquad \pa_s g(s, t, x, y) |u(s,x) - w^\eps(t,y)| \ud s \ud x \ud t \ud y \nonumber \\
& + \intquad \pa_x g(s, t, x, y) \sgn(u(s,x) - w^\eps(t,y)) \left(\frac{u(s,x)^2}{2} - \frac{(w^\eps(t,y))^2}{2} \right) \ud s \ud x \ud t \ud y \nonumber \\
& - \inttriple g(T, t, x, y) |u(T,x) - w^\eps(t,y)| \ud x \ud t \ud y + \inttriple g(0, t, x, y) |u(0,x) - w^\eps(t,y)| \ud x \ud t \ud y \geq 0,
\end{align}
and on the other hand:
\begin{align}\label{doubling4}
& \intquad \pa_t g(s, t, x, y) |w^\eps(t,y) - u(s,x)| \ud s \ud x \ud t \ud y \nonumber \\
& + \intquad \pa_y g(s, t, x, y) \sgn(w^\eps(t,y) - u(s,x)) \left(\frac{(w^\eps(t,y))^2}{2} - \frac{u(s,x)^2}{2} \right) \ud s \ud x \ud t \ud y \nonumber \\
& - \inttriple g(s, T, x, y) |w^\eps(T,y) - u(s,x)| \ud s \ud xd \ud y + \inttriple g(s, 0, x, y) |w^\eps(0,y) - u(s,x)| \ud s \ud x \ud y \nonumber \\
& \geq \frac{1}{2c} \bigg( \intquad \pa_t g(s, t, x, y) r_1^\eps(t,y) \ud s \ud x \ud t \ud y + c  \intquad \pa_y g(s, t, x, y) r_2^\eps(t,y) \ud s \ud x \ud t \ud y \nonumber \\
& - \inttriple g(s, T, x, y) r_1^\eps(T, y) \ud s \ud x \ud y + \inttriple g(s, 0, x, y) r_1^\eps(0,y) \ud s \ud x \ud y \bigg) =: \text{RHS}.
\end{align}
Now, since $|\cdot|$ is even, and $\pa_s g = - \pa_t g$ and $\pa_x g = - \pa_y g$, we deduce by adding \eqref{doubling3} and \eqref{doubling4}:
\begin{align}\label{doubling5}
& - \inttriple g(T, t, x, y) |u(T,x) - w^\eps(t,y)| \ud x \ud t \ud y + \inttriple g(0, t, x, y) |u(0,x) - w^\eps(t,y)| \ud x \ud t \ud y \nonumber \\
& - \inttriple g(s, T, x, y) |u(s,x) - w^\eps(T,y)| \ud s \ud x \ud y + \inttriple g(s, 0, x, y) |u(s,x) - w^\eps(0,y)| \ud s \ud x \ud y \geq \text{RHS}.
\end{align}
Then, we write:
\begin{align}\label{eq:I12}
\|u(T) - w^\eps(T)\|_{L^1} & = \inttriple \omega_\alpha(T - t) \Omega_\beta(x - y) |u(T,y) - w^\eps(T,y)| \ud x \ud t \ud y \nonumber \\
& + \inttriple \omega_\alpha(s - T) \Omega_\beta(x - y) |u(T,y) - w^\eps(T,y)| \ud s \ud x \ud y, \nonumber \\
& =: I_1 + I_2.
\end{align}
A triangle inequality gives for $I_1$:
\begin{align*}
I_1 & \leq \inttriple \omega_\alpha(T - t) \Omega_\beta(x - y) |u(T,y) - u(T,x)| \ud x \ud t \ud y \\
& + \inttriple \omega_\alpha(T - t) \Omega_\beta(x - y) |u(T,x) - w^\eps(t,y)| \ud x \ud t \ud y \\
& + \inttriple \omega_\alpha(T - t) \Omega_\beta(x - y) |w^\eps(t,y) - w^\eps(T,y)| \ud x \ud t \ud y \\
& =: T_1 + T_2 + T_3.
\end{align*}
with $T_1 \leq C \beta \cdot TV(u_0)$, the second term $T_2$ appearing in \eqref{doubling5} and for the last one we write:
$$
T_3 \leq \int_\RR \Omega_\beta(x - y) \int_0^T \omega_\alpha(T - t) \int_\RR |w^\eps(t,y) - w^\eps(T,y)| \ud y  \ud t \ud x,
$$
and then we use the fact that $w^\eps$ is uniformely Lipschitz in $L^1(\RR)$ with respect to $\eps$. Indeed, one has $\pa_t w^\eps = \frac{\pa_t a^\eps (h'(a^\eps) - 1)}{2c}$ with $h'(a^\eps) - 1$ being uniformely bounded with respect to $\eps$ as $a^\eps$ stays in the compact set $[m,M]$ for all time. In addition, estimating $\|\pa_t a^\eps(t) \|_{L^1}$ can be done reusing \eqref{eq:ab} and \eqref{eq:G}:
$$
\|\pa_t a^\eps(t) \|_{L^1} \leq c \|\pa_x a^\eps(t) \|_{L^1} + \frac{1}{\eps} \|G(a^\eps(t), b^\eps(t))\|_{L^1} \leq C \left(TV(a_0) + TV(b_0)\right).
$$
with $C > 0$ still independent of time and of $\eps$. Hence $\|\pa_t w^\eps(t) \|_{L^1} \leq C \left(TV(a_0) + TV(b_0)\right)$ and $T_3 \leq \alpha C \left(TV(a_0) + TV(b_0)\right)$. All in all, we get for $I_1$:
$$
I_1 \leq \inttriple \omega_\alpha(T - t) \Omega_\beta(x - y) |u(T,x) - w^\eps(t,y)| \ud x \ud t \ud y +  C \beta \cdot TV(u_0) + \alpha C \left(TV(a_0) + TV(b_0)\right).
$$
And, similarly, for $I_2$:
$$
I_2 \leq \inttriple \omega_\alpha(s - T) \Omega_\beta(x - y) |u(s,x) - w^\eps(T,y)| \ud s \ud x \ud y + C(\alpha + \beta)TV(u_0).
$$
Back to \eqref{eq:I12}, we obtain:
\begin{align}\label{doubling6}
\|u(T) - w^\eps(T)\|_{L^1} & \leq \inttriple \omega_\alpha(t) \Omega_\beta(x - y) |u(0,x) - w^\eps(t,y)| \ud x \ud t \ud y \\
& + \inttriple \omega_\alpha(s) \Omega_\beta(x - y) |u(s,x) - w^\eps(0,y)| \ud s \ud x \ud y - RHS \\
& + \alpha C \left(TV(a_0) + TV(b_0)\right) +  C(\alpha + \beta)TV(u_0).
\end{align}
But using a triangle inequality, one can show that:
$$
\inttriple \omega_\alpha(t) \Omega_\beta(x - y) |u_0(x) - w^\eps(t,y)| \ud x \ud t \ud y \leq C \beta \cdot TV(u_0) + \alpha C \left(TV(a_0) + TV(b_0)\right),
$$
and similarly:
$$
\inttriple \omega_\alpha(s) \Omega_\beta(x - y) |u(s,x) - w^\eps(0,y)| \ud s \ud x \ud y \leq C (\alpha + \beta) TV(u_0).
$$
We then bound from above the term RHS using inequality $\|r_i^\eps(t) \|_{L^1} \leq C \left(TV(a_0) + TV(b_0)\right) \eps$ for $i = 1,2$:
\begin{align*}
\bigg| \text{RHS} \bigg| & = \frac{1}{2c} \bigg| \frac{1}{\alpha} \intquad \omega'\left(\frac{s - t}{\alpha}\right) \Omega_\beta(x - y) r_1^\eps(t,y) \ud s \ud x \ud t \ud y + \frac{c}{\beta} \intquad \omega_\alpha(s - t) \Omega'(x - y) r_2^\eps(t,y) \ud s \\
&  \ud x \ud t \ud y - \inttriple \omega_\alpha(s - T) \Omega_\beta(x - y) r_1^\eps(T, y) \ud s \ud x \ud y + \inttriple \omega_\alpha(s) \Omega_\beta(x - y) r_1^\eps(0,y) \ud s \ud x \ud y \bigg|, \\
& \leq C \left(\frac{T}{\alpha} + \frac{T}{\beta} + 1 \right) \cdot \left(TV(a_0) + TV(b_0)\right) \eps.
\end{align*}
Finally, we get:
$$
\|u(T) - w^\eps(T)\|_{L^1} \leq C \left(\frac{T}{\alpha} + \frac{T}{\beta} + 1 \right) \left(TV(a_0) + TV(b_0)\right) \eps + C (\alpha + \beta) TV(u_0) + \alpha C \left(TV(a_0) + TV(b_0)\right),
$$
which, after optimizing the values of $\alpha$ and $\beta$ and noticing that $TV(a_0), TV(b_0) \leq C \cdot TV(u_0)$, gives:
$$
\|u(T) - w^\eps(T)\|_{L^1} \leq C TV(u_0) (\sqrt{\eps T} + \eps),
$$
and this inequality, along with $|h(a) - b| \leq C |G(a,b)|$ and \eqref{eq:G} gives in turn the result.
\end{proof}

Denoting $\rho = - \pa_x u$, the convergence of $u^\eps(t)$ towards $u(t)$ in $L^1(\RR)$ ensures that $\rho(t)$ is a probability measure. Indeed, since for all $\eps > 0$, $\rho^\eps = - \pa_x u^\eps$ is a nonnegative distribution, so is $\rho$. The Riesz-Markov theorem then ensures that $\rho$ can be represented by a nonnegative Borel measure. Besides, for a.e. $t \geq 0$, $u^\eps(t)$ is a nonincreasing function taking values in $[0,1]$ and hence converges to a certain limit when $x$ goes to $+\infty$. The same holds true for the limit function $u(t)$. But, since $u^\eps(t) - u(t) \in L^1(\RR)$, then  $u^\eps(t,x) - u(t,x)$ must vanish as $x$ goes to $+\infty$. Therefore the total mass of $\rho(t)$ is 1.

 Then, passing to the relaxation system \eqref{eq:agg_relax} for the aggregation equation \eqref{eq:aggreg} can be done by using \eqref{dWF-1} with $p=1$.
As a consequence, Theorem \ref{thm:cvgu} translates as follows for the aggregation:
\begin{theorem}\label{thm:cvgrho}
Let $\rho_0 \in \calP_2(\RR)$, $c > 1/2$ and set $\sigma_0 = a[\rho_0] \rho_0$. There exists a constant $C > 0$ such that, for any $\eps > 0$, denoting $(\rho^\eps, \sigma^\eps)$ the solution to \eqref{eq:agg_relax} with initial data $(\rho_0, \sigma_0)$, one has :
$$
\forall T > 0, \qquad W_1(\rho(T),\rho^\eps(T)) \leq C (\sqrt{\eps T} + \eps),
$$
where $\rho \in C([0, +\infty), \calP_2(\RR))$ is the unique solution \eqref{eq:aggreg} with initial datum $\rho_0$.
\end{theorem}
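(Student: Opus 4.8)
The plan is to deduce Theorem~\ref{thm:cvgrho} from Theorem~\ref{thm:cvgu} by transporting the $L^1$ estimate on the potentials to a $W_1$ estimate on the measures, using the one-dimensional correspondence $\rho \leftrightarrow u$ through cumulative distribution functions together with the isometry \eqref{dWF-1}.

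First I would introduce $u_0(x) = \frac12 - \int_{-\infty}^x \rho_0(dy)$, i.e. the shifted cumulative distribution function of $\rho_0$, and check that it fulfills the hypotheses of Theorem~\ref{thm:cvgu}. Since $\rho_0 \in \calP_2(\RR)$ is a probability measure, $u_0$ is nonincreasing from $\frac12$ to $-\frac12$, so $u_0 \in L^\infty \cap BV(\RR)$ with $\|u_0\|_{L^\infty} = \frac12$ and $TV(u_0) = 1$. Thus the assumption $c > \frac12$ is exactly the subcharacteristic condition $c > \|u_0\|_{L^\infty}$ required in Theorem~\ref{thm:cvgu}, and the normalization $TV(u_0) = 1$ is what absorbs the data-dependent constant. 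It then remains to check that the datum $v_0(x) = \frac12 - \int_{-\infty}^x \sigma_0(dy)$ associated with $\sigma_0 = a[\rho_0]\rho_0$ is the well-prepared one required by the theorem; this is where the identities $a[\rho_0] = u_0$ and $\rho_0 = -\partial_x u_0$ enter, through an integration by parts relating $\int_{-\infty}^x \sigma_0$ to $u_0^2$.

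Next I would apply Theorem~\ref{thm:cvgu} to $(u_0, v_0)$, which yields
$$
\|u(T) - u^\eps(T)\|_{L^1} \leq C\, TV(u_0)\,(\sqrt{\eps T} + \eps) = C\,(\sqrt{\eps T} + \eps),
$$
where $u$ is the entropy solution of Burgers and, as established in the paragraph preceding the statement, $\rho = -\partial_x u \in C([0,\infty),\calP_2(\RR))$ is the corresponding solution of \eqref{eq:aggreg} while $\rho^\eps = -\partial_x u^\eps$ solves \eqref{eq:agg_relax}. The central step is then to identify this $L^1$ distance with a Wasserstein distance. Writing $u(T,x) = \frac12 - F_{\rho(T)}(x)$ and $u^\eps(T,x) = \frac12 - F_{\rho^\eps(T)}(x)$ gives $u(T,\cdot) - u^\eps(T,\cdot) = F_{\rho^\eps(T)} - F_{\rho(T)}$, and the one-dimensional identity deduced from \eqref{dWF-1} with $p=1$ (computing the area between two monotone graphs horizontally rather than vertically),
$$
W_1(\mu,\nu) = \int_\RR |F_\mu(x) - F_\nu(x)|\,dx,
$$
gives $W_1(\rho(T),\rho^\eps(T)) = \|u(T) - u^\eps(T)\|_{L^1}$. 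Combining the two displays yields the announced estimate.

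The genuinely hard analysis is already contained in Theorem~\ref{thm:cvgu}, which I take as given. At the level of Theorem~\ref{thm:cvgrho} I expect the only delicate points to be bookkeeping: verifying that $\rho_0 \in \calP_2(\RR)$ yields a $u_0 \in L^\infty \cap BV(\RR)$ with the correct sup bound, the normalization $TV(u_0) = 1$, and a well-prepared $v_0$; and justifying the isometry identity for possibly atomic $\rho$ and $\rho^\eps$, so that the $L^1$ distance of the potentials is exactly the $W_1$ distance of the measures. Once the measures are known to be probability measures (as recalled before the statement), these steps are routine and the rate $\sqrt{\eps T} + \eps$ is inherited directly from Theorem~\ref{thm:cvgu}.
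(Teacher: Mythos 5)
Your proposal is correct and follows essentially the same route as the paper: apply Theorem~\ref{thm:cvgu} to $u_0=\frac12-F_{\rho_0}$ (so that $\|u_0\|_{L^\infty}=\frac12$, $TV(u_0)=1$, and $c>\frac12$ is the subcharacteristic condition), then convert the $L^1$ bound into a $W_1$ bound via the one-dimensional identity $W_1(\mu,\nu)=\int_\RR|F_\mu-F_\nu|\,dx$, i.e.\ \eqref{dWF-1} with $p=1$, after checking that $\rho(T)$ and $\rho^\eps(T)$ are probability measures. The only point to finish carefully is the one you flagged yourself, namely that $\sigma_0=a[\rho_0]\rho_0=-\pa_x(u_0^2/2)$ makes $v_0$ well-prepared in the sense of Theorem~\ref{thm:cvgu} (up to the additive normalization constant in the definition of $v_0$).
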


\section{Numerical discretization}\label{sec:num}

From now on, we denote $\Delta t$ the time step and we introduce a Cartesian mesh of size $\Delta x$. We denote $t^n=n\Delta t$ for $n\in\NN$ and $x_j = j\Delta x$ for $j\in\ZZ$. In this section, we extend our framework and consider the aggregation equation \eqref{eq:aggreg} with arbitrary pointy potentials $W$, which satisfy the following conditions:
\begin{enumerate}
\item $W$ is even and $W(0) = 0$,
\item $W \in \mathcal{C}^1(\RR \setminus \{0\})$,
\item $W$ is $\lambda$-convex, i.e. there exists $\lambda \in \RR$ such that $W(x) - \lambda \frac{|x|^2}{2}$ is convex,
\item $W$ is $a_\infty$-lipschitz continuous for some $a_\infty \geq 0$.
\end{enumerate}
In this framework, the convergence of $\rho^\eps$ towards $\rho$ for a slightly different problem has also been studied in \cite{JamesVauchelet}. Adapting the argument, the convergence still holds provided the subcharacteristic condition $c > a_\infty$ is verified.
However, for such general potentials, the authors were not able to obtain the estimates of the speed of convergence as stated in Theorem \ref{thm:cvgrho}.

In this section, we propose some numerical schemes able to capture the limit $\eps\to 0$, that is satisfying the so-called asymptotic preserving (AP) property.
We consider two approaches, the first one based on a splitting algorithm, the second one based on a well-balanced discretisation.

\subsection{A splitting algorithm}

A first simple approach to discretize system \eqref{eq:agg_relax} is to use a splitting method.
Such a method is known to be convergent and easy to implement but introduces numerical diffusion.

Notice that the system \eqref{eq:agg_relax} rewrites, with $\mu = \sigma - c\rho$, $\nu = \sigma + c\rho$, as:
\begin{subequations}\label{eq:munu}
  \begin{align}
    &\pa_t \mu - c\pa_x \mu = \frac{1}{\eps} \Big(a\Big[\frac{\nu - \mu}{2c}\Big] \Big(\frac{\nu - \mu}{2c}\Big) - \frac{\mu + \nu}{2}\Big)  \label{eq:mu}  \\
    &\pa_t \nu + c\pa_x \nu = \frac{1}{\eps} \Big(a\Big[\frac{\nu - \mu}{2c}\Big] \Big(\frac{\nu - \mu}{2c}\Big) - \frac{\mu + \nu}{2}\Big).  \label{eq:nu}
  \end{align}
\end{subequations}
The idea of the method is to solve in a first step on $(t^n,t^n+\Delta t)$ the system
\begin{align*}
  &\pa_t \mu = \frac{1}{\eps} \Big(a\Big[\frac{\nu - \mu}{2c}\Big] \Big(\frac{\nu - \mu}{2c}\Big) - \frac{\mu + \nu}{2}\Big)  \\
  &\pa_t \nu =  \frac{1}{\eps} \Big(a\Big[\frac{\nu - \mu}{2c}\Big] \Big(\frac{\nu - \mu}{2c}\Big) - \frac{\mu + \nu}{2}\Big),
\end{align*}
with initial data $(\mu(t^n),\nu(t^n))=(\mu^n,\nu^n)$.
We obtain $\mu^{n+\frac 12}_j = \mu(t^n+\Delta t,x_j)$ and $\nu^{n+\frac 12}_j = \nu(t^n+\Delta t,x_j)$.
Notice that this system may be solved explicitely. Indeed, by adding and subtracting the two equations, we deduce after an integration
\begin{subequations}\label{split1}
\begin{align}
  &\nu^{n+\frac 12}_j - \mu^{n+\frac 12}_j = \nu^n_j - \mu^n_j  \label{split1:mu} \\
  &\mu^{n+\frac 12}_j + \nu^{n+\frac 12}_j = (\mu^n_j+\nu^n_j) e^{-\Delta t/\eps} + a\Big[\frac{\nu^n - \mu^n}{2c}\Big] \Big(\frac{\nu^n - \mu^n}{2c}\Big) (1-e^{-\Delta t/\eps}).  \label{split1:nu}
\end{align}
\end{subequations}
Then, in a second step, we discretize by a classical finite volume upwind scheme the system
\begin{align*}
  &\pa_t \mu - c\pa_x \mu = 0, \qquad
  \pa_t \nu + c\pa_x \nu = 0.
\end{align*}
That is
\begin{subequations}\label{split2}
\begin{align}
  & \mu_j^{n+1} = \mu_j^{n+\frac 12} + c \frac{\Delta t}{\Delta x} (\mu_{j+1}^{n+\frac 12} - \mu_j^{n+\frac 12}),   \label{split2:mu}  \\
  & \nu_j^{n+1} = \nu_j^{n+\frac 12} - c \frac{\Delta t}{\Delta x} (\nu_j^{n+\frac 12} - \nu_{j-1}^{n+\frac 12}).  \label{split2:nu}
\end{align}
\end{subequations}
Coming back to the variables $\rho$ and $\sigma$, we obtain
\begin{align*}
  & \nu_j^{n+\frac 12} = c \rho_j^n + \sigma_j^n e^{-\Delta x/\eps} + a_j^n \rho_j^n (1-e^{-\Delta t/\eps}),  \\
  & \mu_j^{n+\frac 12} = -c \rho_j^n + \sigma_j^n e^{-\Delta x/\eps} + a_j^n \rho_j^n (1-e^{-\Delta t/\eps}),
\end{align*}
with $a_j^n = - \ds \sum_{k \neq j} W'(x_j - x_k) \rho_k^n$. Then, the splitting algorithm reads
\begin{align}
  \rho_j^{n+1} & = \rho_j^n - \frac 12 \frac{\Delta t}{\Delta x} (\mu_{j+1}^{n+\frac 12} + \nu_j^{n+\frac 12} - \mu_{j}^{n+\frac 12} - \nu_{j-1}^{n+\frac 12})  \nonumber \\
            & = \rho_j^n - \frac 12 \frac{\Delta t}{\Delta x} \Big((\sigma_{j+1}^n-\sigma_{j-1}^n) e^{-\Delta t/\eps} + (1-e^{-\Delta t/\eps})(a_{j+1}^n \rho_{j+1}^n - a_{j-1}^n \rho_{j-1}^n) - c(\rho_{j+1}^n-2\rho_j^n+\rho_{j-1}^n) \Big),  \label{rho:split}
\end{align}
and
\begin{align}
  \sigma_j^{n+1} & = \sigma_j^{n+\frac 12} + \frac{c}{2} \frac{\Delta t}{\Delta x} (\sigma_{j+1}^n-2\sigma_j^n+\sigma_{j-1}^n)e^{-\Delta t/\eps}   \nonumber \\
            & \qquad + \frac{c}{2} \frac{\Delta t}{\Delta x}\left((a_{j+1}^n \rho_{j+1}^n - 2 a_j^n \rho_j^n+ a_{j-1}^n \rho_{j-1}^n)(1-e^{-\Delta t/\eps}) - c(\rho_{j+1}^n-\rho_{j-1}^n)\right)  \nonumber \\
            & = \sigma_j^n e^{-\Delta t/\eps} + a_j^n \rho_j^{n}(1-e^{-\Delta t/\eps}) +\frac{c}{2} \frac{\Delta t}{\Delta x} (\sigma_{j+1}^n-2\sigma_j^n+\sigma_{j-1}^n)e^{-\Delta t/\eps}   \label{sigma:split} \\
 & \qquad + \frac{c}{2} \frac{\Delta t}{\Delta x}\left((a_{j+1}^n \rho_{j+1}^n - 2 a_j^n \rho_j^n+ a_{j-1}^n \rho_{j-1}^n)(1-e^{-\Delta t/\eps}) - c(\rho_{j+1}^n-\rho_{j-1}^n)\right).   \nonumber
\end{align}
\begin{lemma}
For any $\eps > 0$, if both the CFL condition $\frac{c\Delta t}{\Delta x} \leq 1$ and the subcharacteristic condition $c \geq a_\infty$ hold, then the splitting scheme \eqref{split1}--\eqref{split2} is $L^1$-stable:
\begin{align*}
\forall n \in \NN, \qquad \sum_{j \in \ZZ} \left(|\mu_j^{n+1}| + |\nu_j^{n+1}|\right) \leq \sum_{j \in \ZZ} \left(|\mu_j^n| + |\nu_j^n|\right).
\end{align*}
\end{lemma}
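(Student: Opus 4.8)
The plan is to show that each of the two half-steps of the splitting is a non-expansion for the $\ell^1$ norm $\|(\mu,\nu)\|_1 := \sum_{j\in\ZZ}(|\mu_j| + |\nu_j|)$, and then to compose the two estimates; throughout I write $\lambda = \frac{c\Delta t}{\Delta x}$ and $e = e^{-\Delta t/\eps}\in(0,1)$. The transport step \eqref{split2} is the easy part: rewriting \eqref{split2:mu}--\eqref{split2:nu} as $\mu_j^{n+1} = (1-\lambda)\mu_j^{n+\frac12} + \lambda\,\mu_{j+1}^{n+\frac12}$ and $\nu_j^{n+1} = (1-\lambda)\nu_j^{n+\frac12} + \lambda\,\nu_{j-1}^{n+\frac12}$, the CFL condition $\lambda\le 1$ makes both updates convex combinations, so the triangle inequality followed by summation over $j$ (with an index shift) yields $\sum_j|\mu_j^{n+1}|\le \sum_j|\mu_j^{n+\frac12}|$ and likewise for $\nu$.

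The crux is the relaxation step \eqref{split1}. Since $\nu-\mu$, hence $\rho_j = \frac{\nu_j-\mu_j}{2c}$, is left unchanged, this step acts site by site as a linear map of $(\mu_j^n,\nu_j^n)$. Substituting $a_j^n\rho_j^n = \frac{a_j^n}{2c}(\nu_j^n-\mu_j^n)$ into \eqref{split1} and setting $r_j = a_j^n/c$, I would bring it to the form $\nu_j^{n+\frac12} = \alpha_j\,\nu_j^n - \beta_j\,\mu_j^n$ and $\mu_j^{n+\frac12} = \alpha_j'\,\mu_j^n - \beta_j'\,\nu_j^n$, with $\alpha_j = \frac{1+e}{2}+\frac{r_j(1-e)}{2}$, $\beta_j = \frac{(1-e)(1+r_j)}{2}$, $\alpha_j' = \frac{1+e}{2}-\frac{r_j(1-e)}{2}$ and $\beta_j' = \frac{(1-e)(1-r_j)}{2}$. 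A direct computation then gives the two identities $\alpha_j'+\beta_j = 1$ and $\alpha_j+\beta_j' = 1$.

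The hypotheses enter precisely here: using that $\rho^n$ is a probability measure together with the $a_\infty$-Lipschitz bound on $W$, one gets $|a_j^n|\le a_\infty \le c$, i.e. $|r_j|\le 1$, and this is exactly what forces all four coefficients to be nonnegative. The triangle inequality then yields site by site $|\mu_j^{n+\frac12}| + |\nu_j^{n+\frac12}| \le (\alpha_j'+\beta_j)|\mu_j^n| + (\alpha_j+\beta_j')|\nu_j^n| = |\mu_j^n| + |\nu_j^n|$, and summing over $j$ shows the relaxation step is also a non-expansion. Composing this with the transport estimate gives the claim.

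I expect the relaxation step to be the main obstacle, and within it two points deserve care. First, the algebraic bookkeeping showing that the two coefficient sums equal exactly $1$, so that no loss is incurred. Second, the bound $|a_j^n|\le c$: this is where the assumptions are genuinely used, resting on the subcharacteristic condition $c\ge a_\infty$ together with $\rho^n$ being a (sub-)probability measure, i.e. on the nonnegativity and the conservation of the total mass of $\rho^n$, which should be recorded beforehand or taken as a standing assumption on the discrete data.
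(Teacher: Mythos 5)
Your proof is correct and follows essentially the same route as the paper: both treat the relaxation half-step as an explicit site-by-site linear map whose coefficients have the right signs under $c\geq a_\infty$ and whose absolute values sum to one, and then handle the upwind transport half-step as a convex combination under the CFL condition. Your explicit identities $\alpha_j'+\beta_j=\alpha_j+\beta_j'=1$ just make precise what the paper leaves implicit in ``taking the absolute value and adding up,'' and your remark that $|a_j^n|\leq a_\infty$ rests on the discrete density being a nonnegative measure of unit mass is a standing assumption the paper also uses without comment.
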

\begin{proof}
We have:
\begin{align*}
\mu_j^{n+\frac 12} & = \frac 12 \left( e^{-\Delta t/\eps} \Big(1 + \frac{a_j^n}{c}\Big) + 1 - \frac{a_j^n}{c} \right) \mu_j^n - \frac{1 - e^{-\Delta t/\eps}}{2} \left(1 - \frac{a_j^n}{c}\right) \nu_j^n, \\
\nu_j^{n+\frac 12} & = - \frac{1 - e^{-\Delta t/\eps}}{2} \left(1 + \frac{a_j^n}{c}\right) \mu_j^n + \frac 12 \left( e^{-\Delta t/\eps} \Big(1 - \frac{a_j^n}{c}\Big) + 1 + \frac{a_j^n}{c} \right) \nu_j^n.
\end{align*}
Under the condition $c \geq a_{\infty}$, in the expression of $\mu_j^{n+\frac 12}$, the coefficient in front of $\mu_j^n$ is nonnegative and the one in front of $\nu_j^n$ is nonpositive. Similarly, in $\nu_j^{n+\frac 12}$, the coefficient of $\mu_j^n$ is nonpositive and the one in front of $\nu_j^n$ is nonnegative. Taking the absolute value and adding up therefore yields:
$$
\Big|\mu_j^{n+\frac 12}\Big| + \Big|\nu_j^{n+\frac 12}\Big| \leq \Big|\mu_j^n\Big| + \Big|\nu_j^n\Big|.
$$
It remains to remark that, provided the CFL condition $\frac{c\Delta t}{\Delta x} \leq 1$ is verified, \eqref{split2} gives:
\begin{align*}
\sum_{j \in \ZZ} \left(|\mu_j^{n+1}| + |\nu_j^{n+1}|\right) & \leq \left(1 - \frac{c\Delta t}{\Delta x}\right) \sum_{j \in \ZZ} \left(\Big|\mu_j^{n+\frac 12}\Big| + \Big|\nu_j^{n+\frac 12}\Big|\right) + \frac{c\Delta t}{\Delta x} \sum_{j \in \ZZ} \Big|\mu_{j+1}^{n+\frac 12}\Big| + \frac{c\Delta t}{\Delta x} \sum_{j \in \ZZ} \Big|\nu_{j-1}^{n+\frac 12}\Big|, \\
& \leq \left(1 - \frac{c\Delta t}{\Delta x}\right) \sum_{j \in \ZZ} \left(|\mu_j^n| + |\nu_j^n|\right) + \frac{c\Delta t}{\Delta x} \sum_{j \in \ZZ} \Big|\mu_j^{n+\frac 12}\Big| + \frac{c\Delta t}{\Delta x} \sum_{j \in \ZZ} \Big|\nu_j^{n+\frac 12}\Big|, \\
& \leq \sum_{j \in \ZZ} \left(|\mu_j^n| + |\nu_j^n|\right).
\end{align*}
\end{proof}
Note that similar schemes have also been studied in \cite{Liu(2000)} and proved convergent at rate $\sqrt{\Delta x}$.

\ss

Let us now verify the AP property.
When $\eps\to 0$, we verify that the equation on $\rho$ \eqref{rho:split} converges to the following Rusanov discretization of \eqref{eq:aggreg} (see \cite{Fabreges(2019)} for numerical simulations using the Rusanov scheme):
\begin{subequations}\label{dis:rusanov}
\begin{align}
& \rho_j^{n+1} =\rho_j^n - \frac 12 \frac{\Delta t}{\Delta x} \Big(a_{j+1}^n \rho_{j+1}^n - a_{j-1}^n \rho_{j-1}^n \Big) + \frac{c\Delta t}{2\Delta x} (\rho_{j+1}^n-2\rho_j^n+\rho_{j-1}^n), \\
& a_j^n = - \ds \sum_{k \neq j} W'(x_j - x_k) \rho_k^n.
\end{align}
\end{subequations}
This limiting scheme provides a consistant discretization  of \eqref{eq:aggreg}. Indeed, similar scheme has been extensively studied in \cite{Carrillo(2016)} using compactness arguments and the following convergence result has been proved:
\begin{lemma}
Assume $\rho_0 \in \mathcal{P}_2(\RR)$ and that the stability conditions $c \frac{\Delta t}{\Delta x} \leq 1$ and $c \geq a_\infty$ are satisfied. Let $T > 0$ and suppose we initialize the scheme \eqref{dis:rusanov} with $\rho_j^0 = \dfrac{1}{\Delta x} \rho_0(C_j)$ where $C_j = [x_{j-\frac 12}, x_{j+\frac 12})$. Then, denoting $\rho_{\Delta x}$ the reconstruction given by the scheme \eqref{dis:rusanov}, that is:
$$
\rho_{\Delta x}(t) = \sum_{n \in \NN} \sum_{j \in \ZZ} \rho_j^n \mathbf{1}_{[t^n, t^{n+1})}(t) \delta_{x_j},
$$
then $\rho_{\Delta x}$ converges weakly in the sense of measures on $[0,T] \times \RR$ towards the solution $\rho$ of equation \eqref{eq:aggreg} as $\Delta x$ goes to 0.
\end{lemma}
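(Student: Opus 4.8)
The plan is to adapt the compactness argument of \cite{Carrillo(2016)}. The proof splits into four stages: a priori estimates, extraction of a convergent subsequence, passage to the limit in the weak formulation, and identification of the limit via uniqueness.

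First I would record the structural properties of the scheme. Rewriting \eqref{dis:rusanov} as
$$
\rho_j^{n+1} = \Big(1 - \frac{c\Delta t}{\Delta x}\Big)\rho_j^n + \frac{\Delta t}{2\Delta x}(c - a_{j+1}^n)\,\rho_{j+1}^n + \frac{\Delta t}{2\Delta x}(c + a_{j-1}^n)\,\rho_{j-1}^n,
$$
one sees that all three coefficients are nonnegative as soon as the CFL condition $c\Delta t/\Delta x \leq 1$ and the bound $|a_j^n| \leq a_\infty \leq c$ hold; the latter follows from $|W'| \leq a_\infty$ together with conservation of unit total mass. Thus the scheme preserves nonnegativity, and a telescoping sum over $j$ shows that the total mass is conserved, so that $\rho_{\Delta x}(t)$ is a probability measure for every $t$. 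Next I would derive a uniform-in-$\Delta x$ bound on the second moment $\sum_j |x_j|^2 \rho_j^n$ on any interval $[0,T]$: multiplying the scheme by $|x_j|^2$, summing, and using $|a_j^n| \leq a_\infty$ and the CFL condition, the second moment is controlled by its initial value plus a term growing at most linearly in $t^n$ (up to an $O(\Delta x)$ contribution from the Rusanov viscosity). By Prokhorov's theorem this yields tightness of $\{\rho_{\Delta x}(t)\}$, uniformly in $t \in [0,T]$ and in $\Delta x$.

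Second, I would establish equicontinuity in time. Since the numerical flux moves mass at speed at most $c$, one obtains an estimate of the form $W_1(\rho_{\Delta x}(t),\rho_{\Delta x}(s)) \leq C(|t - s| + \Delta t)$, uniformly in $\Delta x$, for $0 \leq s \leq t \leq T$. Combined with the tightness above, a refined Arzel\`a--Ascoli argument in $C([0,T];(\calP_1(\RR),W_1))$, equivalently weak-$*$ compactness of the associated measures on $[0,T]\times\RR$, produces a subsequence, still denoted $\rho_{\Delta x}$, converging to some $\rho \in C([0,T];\calP_1(\RR))$.

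Third, I would pass to the limit in the weak formulation. Testing the scheme against $\varphi(t^n,x_j)$ for $\varphi \in C_c^\infty([0,T)\times\RR)$ and performing discrete integrations by parts in both variables, using the consistency of the upwind transport part and the fact that the discrete diffusion term $\frac{c}{2}(\rho_{j+1}^n - 2\rho_j^n + \rho_{j-1}^n)$ carries a factor $\Delta x$ and hence vanishes, one recovers in the limit
$$
\int_0^T\!\!\int_\RR (\pa_t\varphi + a[\rho]\,\pa_x\varphi)\,\rho\,\ud x\,\ud t + \int_\RR \varphi(0,\cdot)\,\rho_0\,\ud x = 0.
$$
The delicate point, and the main obstacle, is the passage to the limit in the nonlinear nonlocal product: the discrete velocity $a_j^n = -\sum_{k\neq j} W'(x_j-x_k)\rho_k^n$ involves $W'$, which is discontinuous at the origin for a pointy potential, so weak-$*$ convergence of $\rho_{\Delta x}$ alone does not pass to the limit in $a[\rho_{\Delta x}]\,\rho_{\Delta x}$. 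This is resolved exactly as in \cite{Carrillo(2016)}: the convention $W'(0)=0$ built into the exclusion of the diagonal $k=j$ gives the correct macroscopic product, and the stronger $W_1$-compactness obtained above, together with the uniform bound $\|a[\rho_{\Delta x}]\|_\infty \leq a_\infty$ and the $\lambda$-convexity of $W$, upgrades the convergence enough to identify the limiting flux. Finally, since measure solutions of \eqref{eq:aggreg} are unique under the $\lambda$-convexity assumption, the limit is independent of the extracted subsequence, and the whole family $\rho_{\Delta x}$ converges weakly in the sense of measures to $\rho$.
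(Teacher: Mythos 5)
First, a point of comparison: the paper does not actually prove this lemma — it is imported verbatim from \cite{Carrillo(2016)}, with the one-line remark that the result is obtained there ``using compactness arguments.'' Your proposal therefore reconstructs the strategy of the cited reference rather than competing with an in-paper argument, and its skeleton is the right one for the easy steps: the rewriting of \eqref{dis:rusanov} with the three nonnegative coefficients $1-\frac{c\Delta t}{\Delta x}$, $\frac{\Delta t}{2\Delta x}(c-a_{j+1}^n)$, $\frac{\Delta t}{2\Delta x}(c+a_{j-1}^n)$ under the CFL and subcharacteristic conditions, positivity and mass conservation by telescoping, the second-moment/tightness estimate, the $W_1(\rho_{\Delta x}(t),\rho_{\Delta x}(s))\leq C(|t-s|+\Delta t)$ equicontinuity, and the discrete summation by parts against a smooth test function are all correct and standard.

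The genuine gap is exactly where you write ``the delicate point'' and then dispose of it in one sentence. Passing to the limit in $a[\rho_{\Delta x}]\,\rho_{\Delta x}$ is the entire content of the theorem: $W'$ is discontinuous at the origin, the limit $\rho(t)$ generically carries atoms after blow-up (this is precisely the regime the paper cares about), and consequently neither weak-$*$ convergence nor convergence in $W_1$ of $\rho_{\Delta x}(t)$ implies convergence of $a[\rho_{\Delta x}](x_j)$ near the atoms, let alone convergence of the product. Saying that the convention $W'(0)=0$ ``gives the correct macroscopic product'' specifies which weak formulation the limit \emph{should} satisfy; it is not an argument that the discrete flux converges to it. Likewise ``$\lambda$-convexity \dots upgrades the convergence enough to identify the limiting flux'' names a hypothesis, not a mechanism. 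The identification step in the literature requires additional structure beyond what you invoke: either the one-dimensional reformulation through cumulative distribution functions and entropy inequalities (the route this paper itself takes for $W(x)=\frac12|x|$ in Theorem \ref{thm:cvgu}), or a stability/duality argument against the Filippov characteristic flow as in \cite{Carrillo(2016)}, or the probabilistic coupling of \cite{Delarue}. As written, your third stage asserts the conclusion of the hard lemma rather than proving it, so the proposal is an accurate table of contents for the proof but not yet a proof.
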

It has been also proved in \cite{Delarue} that the scheme \eqref{dis:rusanov} converges at rate $\sqrt{\Delta x}$.

\subsection{Well-balanced discretization}

Although the splitting method provides a simple way to obtain a discretization which is uniform with respect to the parameter $\eps$, the resulting scheme has strong numerical diffusion and may not have good large time behaviour.
Then, well-balanced schemes have been introduced.
A scheme is said to be well-balanced when it conserves equilibria. The method proposed in this section comes from \cite{Gosse(2016)}.

Let us assume that for some $n\in\NN$ the approximation $(\mu_j^n,\nu_j^n)_{j\in\ZZ}$ of $(\mu(t^n,x_j),\nu(t^n,x_j))_{j\in\ZZ}$ solution of \eqref{eq:munu} is known.
We construct an approximation at time $t^{n+1}$ using a finite volume upwind discretization of \eqref{eq:munu}, with the discretization of the source terms $H_{\mu,j}^n, H_{\nu,j}^n$ to be prescribed right afterwards:
\begin{subequations}\label{dis:upwind}
  \begin{align}
    & \mu_{j}^{n+1} = \mu_j^n + c \frac{\Delta t}{\Delta x} (\mu_{j+1}^n - \mu_j^n) + \frac{\Delta t}{\eps} H_{\mu,j}^n     \\
    & \nu_{j}^{n+1} = \nu_j^n - c \frac{\Delta t}{\Delta x} (\nu_j^n - \nu_{j-1}^n) + \frac{\Delta t}{\eps} H_{\nu,j}^n.
  \end{align}
\end{subequations}
In order to preserve equilibria, we set :
\begin{equation}\label{eq:source}
H_{\mu,j}^n = \frac{1}{\Delta x} \int_{x_{j-1}}^{x_j} H(\bar{\mu}, \bar{\nu}) \ud x, \qquad H(\mu,\nu) = a\Big[\frac{\nu - \mu}{2c}\Big] \Big(\frac{\nu - \mu}{2c}\Big) - \frac{\mu + \nu}{2},
\end{equation}
where $(\bar{\mu}, \bar{\nu})$ solve the stationary system with incoming boundary conditions, on $(x_{j-1}, x_j)$:
\begin{subequations}\label{eq:statbar}
  \begin{align}
    & - c \pa_x \bar{\mu} = \frac{1}{\eps} H(\bar{\mu}, \bar{\nu}) \label{eq:barmu}  \\
    & c \pa_x \bar{\nu} = \frac{1}{\eps} H(\bar{\mu}, \bar{\nu}) \label{eq:barnu}  \\
    & \bar{\mu}(x_j) = \mu_j^n, \qquad \bar{\nu}(x_{j-1}) = \nu_{j-1}^n.   \label{cond:statbar}
  \end{align}
\end{subequations}
And, in the same fashion, $H_{\nu,j}^n = \frac{1}{\Delta x} \int_{x_j}^{x_{j+1}} H(\tilde{\mu}, \tilde{\nu}) \ud x$, where $(\tilde{\mu}, \tilde{\nu})$ is the solution of the stationary system on $(x_j, x_{j+1})$:
\begin{subequations}\label{eq:stattilde}
  \begin{align}
    & - c \pa_x \tilde{\mu} = \frac{1}{\eps} H(\tilde{\mu}, \tilde{\nu}) \label{eq:tildemu}  \\
    & c \pa_x \tilde{\mu} = \frac{1}{\eps} H(\tilde{\mu}, \tilde{\nu}) \label{eq:tildenu}  \\
    & \tilde{\mu}(x_{j+1}) = \mu_{j+1}^n, \qquad \tilde{\nu}(x_j) = \nu_j^n,   \label{cond:stattilde}
  \end{align}
\end{subequations}
Reporting equations \eqref{eq:barnu} and \eqref{eq:tildemu} into the discretization of the source term, we get $H_{\nu,j}^n = \frac{c \eps}{\Delta x} (\bar{\nu}(x_j) - \nu_{j-1})$ and $H_{\mu,j}^n = - \frac{c \eps}{\Delta x} (\mu_j^n - \tilde{\mu}(x_j))$. Hence, one may rewrite the scheme \eqref{dis:upwind} as:
\begin{subequations}\label{dis:wb}
  \begin{align}
    & \mu_{j}^{n+1} = \mu_j^n + c \frac{\Delta t}{\Delta x} (\tilde{\mu}(x_j) - \mu_j^n)   \\
    & \nu_{j}^{n+1} = \nu_j^n - c \frac{\Delta t}{\Delta x} (\nu_j^n - \bar{\nu}(x_j)).
  \end{align}
\end{subequations}
Remark that the stationary system
\begin{equation}
- c \pa_x \mu = \frac{1}{\eps} H(\mu, \nu),  \qquad c \pa_x \nu = \frac{1}{\eps} H(\mu, \nu),
\end{equation}
is equivalent to
\begin{equation}
\pa_x \sigma = 0, \qquad c^2 \pa_x \rho = \frac{1}{\eps} \left(a[\rho] \rho - \sigma \right).
\end{equation}
 Therefore, denoting $\sigma_{j+\frac 12} = \dfrac{\tilde{\mu} + \tilde{\nu}}{2}$ and $\sigma_{j-\frac 12} = \dfrac{\bar{\mu} + \bar{\nu}}{2}$, which are constant respectively on $(x_j, x_{j+1})$ and $(x_{j-1}, x_j)$, one has:
\begin{equation}\label{eq:munusigma}
\tilde{\mu}(x_j) = 2 \sigma_{j+\frac 12} - \nu_j^n, \qquad \bar{\nu}(x_j) = 2 \sigma_{j-\frac 12} - \mu_j^n.
\end{equation}
Thus, it turns out that the scheme can be rewritten only in terms of the discretized unknowns and of $\sigma_{j\pm \frac 12}$:
\begin{subequations}\label{eq:schema_munu}
  \begin{align}
  & \mu_j^{n+1} = \mu_j^n - c \frac{\Delta t}{\Delta x} (\mu_j^n+\nu_j^n) + \frac{2c\Delta t}{\Delta x} \sigma_{j+\frac 12},   \\
  & \nu_j^{n+1} = \nu_j^n - c \frac{\Delta t}{\Delta x} (\mu_j^n+\nu_j^n) + \frac{2c\Delta t}{\Delta x}  \sigma_{j-\frac 12}.
  \end{align}
\end{subequations}
Or equivalently:
\begin{subequations}\label{eq:schema_rhosigma}
  \begin{align}
    & \rho_j^{n+1} = \rho_j^n - \frac{\Delta t}{\Delta x}(\sigma_{j+\frac 12} - \sigma_{j-\frac 12}),   \label{eq:schema_rho} \\
    & \sigma_j^{n+1} = \sigma_j^n - c\frac{\Delta t}{\Delta x}(2\sigma_j^n - \sigma_{j+\frac 12} - \sigma_{j-\frac 12}).  \label{eq:schema_sigma}
  \end{align}
\end{subequations}

However, solving the stationary systems \eqref{eq:statbar} and \eqref{eq:stattilde} involves the resolution of a nonlinear and nonlocal ODE. Instead, we propose an approximation in the spirit of \cite{Gosse(2016)}.

We replace the nonlinear term in \eqref{eq:barmu}--\eqref{eq:barnu} by $a_{j-\frac 12}^n \cdot \frac{\bar{\nu} - \bar{\mu}}{2c}$, where $a_{j-\frac 12}^n$ stands for a fixed and consistent discretization of $a\left[ \frac{\bar{\nu} - \bar{\mu}}{2c} \right]$ on the interval $(x_{j-1}, x_j)$, to be specified afterwards. Similarly, we will replace  the nonlinear term in \eqref{eq:tildemu}--\eqref{eq:tildenu} by $a_{j+\frac 12}^n \cdot \frac{\tilde{\nu} - \tilde{\mu}}{2c}$ with $a_{j+\frac 12}^n$ defined accordingly. In the following, we detail the construction for the problem \eqref{eq:barmu}--\eqref{eq:barnu} on $(x_{j-1}, x_j)$.

Obviously, the definition of $a_{j-\frac 12}^n$ should be taken with care \cite{Gosse(2016), Carrillo(2016)}. In \cite{Delarue}, the authors showed that, when discretizing the product $a[\rho] \rho$, if $a[\rho]$ and $\rho$ were not evaluated at the same point, then the resulting scheme produces the wrong dynamics. To take this into account, we will split $\rho$ into one contribution coming from the left and one contribution coming from the right, i.e. we set $\bar{\rho}=\rho_L+\rho_R$ and $\bar{\sigma}=\sigma_L+\sigma_R$ where $\rho_L(\Delta x) = 0$ and $\rho_R(0) = 0$. This implies that $\bar{\rho}(\Delta x) = \rho_R(\Delta x)$ and $\bar{\rho}(0) = \rho_L(0)$.

More precisely, we solve the two following boundary value problem, on $(0,\Delta x)$,
\begin{subequations}\label{eq:statLR}
\begin{align}
  & \eps c^2 \frac{d}{dx} {\rho_L} = a_{j-\frac 12,L}^n \rho_L - \sigma_L, \qquad \rho_L(\Delta x) = 0, \label{eq:statL} \\
  & \eps c^2 \frac{d}{dx} {\rho_R} = a_{j-\frac 12,R}^n \rho_R - \sigma_R, \qquad \rho_R(0) = 0, \label{eq:statR}
\end{align}
\end{subequations}
We may solve explicitely these linear systems and, since $\rho_L(0) = \bar{\rho}(0)$ and $\rho_R(\Delta x) = \bar{\rho}(\Delta x)$, obtain the relations
\begin{equation}\label{eq:sigmaLR}
\sigma_L = \bar{\rho}(0) \kappa_{j-\frac 12,L}^n, \qquad \qquad \sigma_R = \bar{\rho}(\Delta x) \kappa_{j-\frac 12,R}^n.
\end{equation}
with
\begin{align}
  & \kappa_{j-\frac 12,L}^n = \frac{a_{j-\frac 12,L}^n}{1-\exp (-a_{j-\frac 12,L}^n \Delta x /(\eps c^2))},  \qquad
  \kappa_{j-\frac 12,R}^n = \frac{a_{j-\frac 12,R}^n}{1-\exp (a_{j-\frac 12,R}^n \Delta x /(\eps c^2))}.  \label{eq:cLcR}
\end{align}
Notice that we have
\begin{equation}\label{eq:limkappa}
\kappa_{j-\frac 12,L}^n \to (a_{j-\frac 12,L}^n)_+, \qquad
\kappa_{j-\frac 12,R}^n \to - (a_{j-\frac 12,R}^n)_-, \quad \text{ when } \eps\to 0,
\end{equation}
where we denote $a_+ = \max(0,a) \geq 0$ and $a_- = \max(0,-a) \geq 0$ the positive and negative negative part of $a$.
Using the boundary conditions in \eqref{eq:statbar}, we have:
\begin{equation}\label{eq:rhobar}
\bar{\rho}(0) = \frac{\nu_{j-1}^n-\bar{\mu}(0)}{2c}, \qquad
\bar{\rho}(\Delta x) = \frac{\bar{\nu}(\Delta x) - \mu_{j}^n}{2c}.
\end{equation}
with \eqref{eq:sigmaLR} and the fact that $\bar{\sigma} = \sigma_L + \sigma_R$ is constant on $[0, \Delta x]$, we get the following $2 \times 2$ system on the unknowns $\bar{\mu}(0), \bar{\nu}(\Delta x)$:
\begin{subequations}
  \begin{align}
    & \mu_j^n + \bar{\nu}(\Delta x) = \bar{\mu}(0) + \nu_{j-1}^n,   \\
    & \mu_j^n + \bar{\nu}(\Delta x) = \frac{\nu_{j-1}^n-\bar{\mu}(0)}{2c} \kappa_{j-\frac 12,L}^n +  \frac{\bar{\nu}(\Delta x) - \mu_{j}^n}{2c} \kappa_{j-\frac 12,R}^n
  \end{align}
\end{subequations}
Solving this system yields:
\begin{subequations}
  \begin{align}
    & \bar{\mu}(0) = - \nu_{j-1}^n \frac{c - \kappa_{j-\frac 12,R}^n - \kappa_{j-\frac 12,L}^n}{c - \kappa_{j-\frac 12,R}^n + \kappa_{j-\frac 12,L}^n} - \mu_j^n \frac{\kappa_{j-\frac 12,R}^n}{c - \kappa_{j-\frac 12,R}^n + \kappa_{j-\frac 12,L}^n},   \\
    & \bar{\nu}(\Delta x) = \nu_{j-1}^n \frac{\kappa_{j-\frac 12,L}^n}{c - \kappa_{j-\frac 12,R}^n + \kappa_{j-\frac 12,L}^n} - \mu_j^n \frac{c + \kappa_{j-\frac 12,R}^n + \kappa_{j-\frac 12,L}^n}{c - \kappa_{j-\frac 12,R}^n + \kappa_{j-\frac 12,L}^n}.
  \end{align}
\end{subequations}
From which we deduce with \eqref{eq:rhobar}
\begin{subequations}\label{eq:rhoRrhoL}
  \begin{align}
   & \rho_{j-\frac 12,L}^n := \bar{\rho}(0) = \frac 1c \left(\frac{(c-\kappa_{j-\frac 12,R}^n)\nu_{j-1}^n+\kappa_{j-\frac 12,R}^n\mu_j^n}{c+\kappa_{j-\frac 12,L}^n-\kappa_{j-\frac 12,R}}\right) \\
   & \rho_{j-\frac 12,R}^n := \bar{\rho}(\Delta x) = \frac 1c \left(\frac{\kappa_{j-\frac 12,L}^n \nu_{j-1}^n-(c+\kappa_{j-\frac 12,L}^n)\mu_j^n}{c+\kappa_{j-\frac 12,L}^n-\kappa_{j-\frac 12,R}}\right)
  \end{align}
\end{subequations}
and with \eqref{eq:sigmaLR}
\begin{equation}\label{eq:sigma12}
  \bar{\sigma}_{j-\frac 12} := \sigma_L+\sigma_R = \rho_{j-\frac 12,L}^n \kappa_{j-\frac 12,L}^n + \rho_{j-\frac 12,R}^n \kappa_{j-\frac 12,R}^n
  = \frac{\nu_{j-1}^n \kappa_{j-\frac 12,L}^n - \mu_{j}^n \kappa_{j-\frac 12,R}^n}{c-\kappa_{j-\frac 12,R}^n+\kappa_{j-\frac 12,L}^n},
\end{equation}
(the above quantities are well-defined since $\kappa_{j-\frac 12,L}^n \geq 0$ and $\kappa_{j-\frac 12,R}^n \leq 0$).
Injecting into \eqref{eq:schema_rhosigma}, it gives the following scheme
\begin{subequations}\label{wb:munu}
\begin{align}
  & \mu_j^{n+1} = \left(1 - \frac{c\Delta t}{\Delta x}\right) \mu_j^n - \frac{c\Delta t}{\Delta x} \frac{c - \kappa_{j+\frac 12, R}^n - \kappa_{j+\frac 12, L}^n}{c - \kappa_{j+\frac 12, R}^n + \kappa_{j+\frac 12, L}^n} \nu_j^n - \frac{2c\Delta t}{\Delta x} \frac{\kappa_{j+\frac 12, R}^n}{c - \kappa_{j+\frac 12, R}^n + \kappa_{j+\frac 12, L}^n} \mu_{j+1}^n,   \label{wb:mu} \\
  & \nu_j^{n+1} = \left(1 - \frac{c\Delta t}{\Delta x}\right) \nu_j^n - \frac{c\Delta t}{\Delta x} \frac{c + \kappa_{j-\frac 12, R}^n + \kappa_{j-\frac 12, L}^n}{c - \kappa_{j-\frac 12, R}^n + \kappa_{j-\frac 12, L}^n} \mu_j^n + \frac{2c\Delta t}{\Delta x} \frac{\kappa_{j-\frac 12, L}^n}{c - \kappa_{j-\frac 12, R}^n + \kappa_{j-\frac 12, L}^n} \nu_{j-1}^n,  \label{wb:nu}
\end{align}
\end{subequations}
where the coefficients $\kappa_{j-\frac 12, L/R}^n$ are defined in \eqref{eq:cLcR}.
Equivalently for the variable $(\rho,\sigma)$ the scheme reads
\begin{subequations}\label{wb:rhosigma}
\begin{align}
  & \rho_j^{n+1} = \rho_j^n - \frac{\Delta t}{\Delta x} \left( \frac{\nu_j^n \kappa_{j+\frac 12, L}^n - \mu_{j+1}^n \kappa_{j+\frac 12, R}^n}{c-\kappa_{j+\frac 12, R}^n+\kappa_{j+\frac 12, L}^n} - \frac{\nu_{j-1}^n \kappa_{j-\frac 12, L}^n - \mu_{j}^n \kappa_{j-\frac 12, R}^n}{c-\kappa_{j-\frac 12, R}^n+\kappa_{j-\frac 12, L}^n} \right)  \label{wb:rho} \\
  & \sigma_j^{n+1} = \sigma_j^n - c \frac{\Delta t}{\Delta x} \left( 2\sigma_j^n - \frac{\nu_j^n \kappa_{j+\frac 12, L}^n - \mu_{j+1}^n \kappa_{j+\frac 12, R}^n}{c-\kappa_{j+\frac 12, R}^n+\kappa_{j+\frac 12, L}^n} - \frac{\nu_{j-1}^n \kappa_{j-\frac 12, L}^n - \mu_{j}^n \kappa_{j-\frac 12, R}^n}{c-\kappa_{j-\frac 12, R}^n+\kappa_{j-\frac 12, L}^n} \right),
\end{align}
\end{subequations}
where we recall that $\mu_j^n = \sigma_j^n - c\rho_j^n$ and $\nu_j^n = \sigma_j^n + c\rho_j^n$.

It remains to define the velocities $a_{j-\frac 12, L/R}^n$ used in \eqref{eq:statLR} and in \eqref{eq:cLcR}.
We take
$$
a_{j-\frac 12,L/R}^n = - \sum_{k \neq j} W'(x_j - x_k) \rho_{k-\frac 12,L/R}^n.
$$
However, this discretization implies the resolution of a nonlinear problem, since the quantities $\rho_{k-\frac 12,L/R}^n$ depends nonlinearly on $a_{j-\frac 12,L/R}^n$.


Then, we implement a fixed point method initialized with $a_{j-\frac 12,L}^{n, (0)} := a_{j-1}^n$ and $a_{j-\frac 12,R}^{n, (0)} := a_j^n$. Solving, on each cell $(x_{j-1}, x_j)$, the system of ODEs \eqref{eq:statLR} with these values for the velocities gives two sequences $(\rho_{j-\frac 12,L}^{(1)})_{j \in \ZZ}$ and $(\rho_{j-\frac 12,R}^{(1)})_{j \in \ZZ}$. Then, we assign the next value of the velocity to $a_{j-\frac 12,L/R}^{n, (1)} := - \ds \sum_{k \neq j} W'(x_j - x_k) \rho_{k-\frac 12,L/R}^{(1)}$, which allows us to compute new values for the left and right densities $(\rho_{j-\frac 12,L}^{(2)})_{j \in \ZZ}$ and $(\rho_{j-\frac 12,R}^{(2)})_{j \in \ZZ}$ through \eqref{eq:statLR}. We iterate until $W_2(\rho_L^{(i)}, \rho_L^{(i+1)})$ and $W_2(\rho_R^{(i)}, \rho_R^{(i+1)})$ pass below a certain threshold. Notice that the velocities $a_{j-\frac 12,L/R}^{n, (i)}$ always remain bounded by $a_\infty$. In practice, only a few iterations are needed.

The resulting scheme is consistent for any $\eps > 0$ and stable under standard stability conditions, as show the following lemmas.
\begin{lemma}[$L^1$ stability]\label{lemma:stabL1}
Under the CFL condition $\frac{c\Delta t}{\Delta x} \leq 1$ and the subcharacteristic condition $c \geq a_\infty$, there holds that the sequence $(\mu_j^n,\nu_j^n)_{j,n}$ defined by the scheme \eqref{wb:munu} verifies the following $L^1$ stability property:
\begin{align*}
\forall n \in \NN, \qquad \sum_{j \in \ZZ} \left(|\mu_j^{n+1}| + |\nu_j^{n+1}|\right) \leq \sum_{j \in \ZZ} \left(|\mu_j^n| + |\nu_j^n|\right).
\end{align*}
\end{lemma}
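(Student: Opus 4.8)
The plan is to show that, under the CFL condition $c\Delta t/\Delta x \le 1$, the one-step map $(\mu^n,\nu^n)\mapsto(\mu^{n+1},\nu^{n+1})$ defined by \eqref{wb:mu}--\eqref{wb:nu} is nonexpansive in $\ell^1$. First I would set $\lambda = c\Delta t/\Delta x\in(0,1]$ and record the two sign facts that drive everything: from \eqref{eq:cLcR}, together with the bound $|a_{j\pm\frac12,L/R}^n|\le a_\infty$ noted above, one has $\kappa_{j\pm\frac12,L}^n\ge 0$ and $\kappa_{j\pm\frac12,R}^n\le 0$, so each denominator $D_{j\pm\frac12}:=c-\kappa_{j\pm\frac12,R}^n+\kappa_{j\pm\frac12,L}^n$ is $\ge c>0$ and the diagonal coefficient $1-\lambda$ is nonnegative. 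I would then take absolute values in \eqref{wb:mu} and \eqref{wb:nu}, apply the triangle inequality, sum over $j\in\ZZ$, and shift the summation index in the off-diagonal terms (the $\mu_{j+1}^n$ term of \eqref{wb:mu} and the $\nu_{j-1}^n$ term of \eqref{wb:nu}) so as to collect on the right-hand side the total coefficient $C_{\mu,j}$ of $|\mu_j^n|$ and the total coefficient $C_{\nu,j}$ of $|\nu_j^n|$. The claim reduces to proving $C_{\mu,j}\le 1$ and $C_{\nu,j}\le 1$ for every $j$.

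Next I would simplify these two pointwise inequalities. Writing $\kappa_L,\kappa_R,D$ for the quantities at a generic interface and using $\kappa_L\ge 0\ge\kappa_R$, a direct computation gives $C_{\mu,j}=1-\lambda+\frac{\lambda}{D}\big(2|\kappa_R|+|c+\kappa_R+\kappa_L|\big)$ with the $\kappa$'s at $j-\frac12$, so that $C_{\mu,j}\le 1$ is equivalent to $|c+\kappa_R+\kappa_L|\le c+\kappa_R+\kappa_L$, i.e. to $c+\kappa_{j-\frac12,R}^n+\kappa_{j-\frac12,L}^n\ge 0$. Symmetrically, $C_{\nu,j}=1-\lambda+\frac{\lambda}{D}\big(2\kappa_L+|c-\kappa_R-\kappa_L|\big)$ at $j+\frac12$, and $C_{\nu,j}\le 1$ is equivalent to $c-\kappa_{j+\frac12,R}^n-\kappa_{j+\frac12,L}^n\ge 0$. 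Both conditions follow from the single interface estimate $|\kappa_L+\kappa_R|\le c$, so it remains only to prove this bound.

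This last estimate is the heart of the matter, and the main obstacle, because $\kappa_L$ and $\kappa_R$ are individually unbounded as $\eps$ grows (both behave like $\pm\,\eps c^2/\Delta x$), so one cannot bound them separately and must exploit a cancellation. I would introduce $\phi(x)=x/(1-e^{-x})$ (with $\phi(0)=1$) and $\beta=\Delta x/(\eps c^2)>0$, and rewrite \eqref{eq:cLcR} as $\kappa_L=\beta^{-1}\phi(a_L\beta)$ and $\kappa_R=-\beta^{-1}\phi(-a_R\beta)$, where $a_L,a_R$ denote the discrete velocities at the interface. The two structural properties of $\phi$ I would establish are the identity $\phi(x)-\phi(-x)=x$ and, by differentiating it, $\phi'(x)+\phi'(-x)=1$; together with $\phi'>0$ (which follows from $1-(1+x)e^{-x}>0$) this yields $\phi'(x)\in(0,1)$ for all $x$. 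Writing $\phi(a_L\beta)-\phi(-a_R\beta)=a_L\beta+\big(\phi(-a_L\beta)-\phi(-a_R\beta)\big)$ and applying the mean value theorem gives
$$
\kappa_L+\kappa_R=(1-\phi'(\eta))\,a_L+\phi'(\eta)\,a_R
$$
for some $\eta$, i.e. a convex combination of $a_L$ and $a_R$. Since $\phi'(\eta)\in(0,1)$ and $|a_L|,|a_R|\le a_\infty\le c$ by the subcharacteristic condition, I conclude $|\kappa_L+\kappa_R|\le a_\infty\le c$, which closes the two pointwise inequalities and hence the $L^1$ stability estimate. The delicate points throughout are keeping track of the interface at which each $\kappa$ is evaluated after the index shifts, and verifying the sign and monotonicity properties of $\phi$ that make the cancellation work.
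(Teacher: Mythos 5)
Your proof is correct and follows essentially the same route as the paper's: sign analysis of the coefficients in \eqref{wb:munu}, triangle inequality, and reindexing of the off-diagonal sums, with everything reducing to the interface bound $-c \le \kappa_{j\pm\frac12,L}^n+\kappa_{j\pm\frac12,R}^n \le c$. The only difference is that the paper simply asserts this bound as a consequence of the subcharacteristic condition, whereas you actually prove it — and your convex-combination argument via $\phi(x)=x/(1-e^{-x})$, using $\phi(x)-\phi(-x)=x$ and hence $\phi'(x)+\phi'(-x)=1$ with $\phi'\in(0,1)$, is genuinely needed here, since $\kappa_L$ and $\kappa_R$ are each of order $\eps c^2/\Delta x$ and only their sum is controlled by $a_\infty$.
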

\begin{proof}
In each combination of \eqref{wb:munu}, the first coefficient is nonnegative under the CFL condition $\frac{c\Delta t}{\Delta x} \leq 1$, and so is the last one since $\kappa_{j\pm \frac 12,L}^n \geq 0$ and $\kappa_{j\pm \frac 12,R}^n \leq 0$. Moreover, under the subcharacteristic condition $c \geq a_\infty$, it holds that $-c \leq \kappa_{j\pm \frac 12,R} + \kappa_{j\pm \frac 12,R} \leq c$ so the remaining coefficient is nonpositive. Thus, applying the triangle inequality and reindexing the sums appropriately,
\begin{align*}
\sum_{j \in \ZZ} \left(|\mu_j^{n+1}| + |\nu_j^{n+1}|\right) & \leq \sum_{j \in \ZZ} \left(1 - \frac{c\Delta t}{\Delta x}\right) |\mu_j^n| + \sum_{j \in \ZZ} \frac{c\Delta t}{\Delta x} \frac{c - \kappa_{j+\frac 12, R}^n - \kappa_{j+\frac 12, L}^n}{c - \kappa_{j+\frac 12, R}^n + \kappa_{j+\frac 12, L}^n} |\nu_j^n|  \\
                 &\quad - \sum_{j \in \ZZ} \frac{2c\Delta t}{\Delta x} \frac{\kappa_{j+\frac 12, R}^n}{c - \kappa_{j+\frac 12, R}^n + \kappa_{j+\frac 12, L}^n} |\mu_{j+1}^n| + \sum_{j \in \ZZ} \left(1 - \frac{c\Delta t}{\Delta x}\right) |\nu_j^n| \\
  & \quad + \sum_{j \in \ZZ} \frac{c\Delta t}{\Delta x} \frac{c + \kappa_{j+\frac 12, R}^n + \kappa_{j+\frac 12, L}^n}{c - \kappa_{j+\frac 12, R}^n + \kappa_{j+\frac 12, L}^n} |\mu_{j+1}^n| + \frac{2c\Delta t}{\Delta x} \frac{\kappa_{j+\frac 12, L}^n}{c - \kappa_{j+\frac 12, R}^n + \kappa_{j+\frac 12, L}^n} |\nu_j^n|, \\
& \leq \left(1 - \frac{c\Delta t}{\Delta x}\right) \sum_{j \in \ZZ} \left(|\mu_j^n| + |\nu_j^n|\right) + \frac{c\Delta t}{\Delta x} \sum_{j \in \ZZ} |\mu_{j+1}^n| + \frac{c\Delta t}{\Delta x} \sum_{j \in \ZZ} |\nu_j^n|, \\
& \leq \sum_{j \in \ZZ} \left(|\mu_j^n| + |\nu_j^n|\right).
\end{align*}
It concludes the proof.
\end{proof}
\begin{lemma}[Consistency for smooth solutions]\label{consistance}
Assume that, for all $j \in \ZZ$, we have $a_{j-\frac 12, L/R}^n = - \ds \sum_{k \neq j} W'(x_j - x_k) \rho_{k-\frac 12, L/R}$. Then, for any $\eps > 0$, the scheme \eqref{eq:schema_rhosigma} is consistent with \eqref{eq:agg_relax} provided that the solutions are smooth enough.
\end{lemma}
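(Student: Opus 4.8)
The plan is to prove consistency in the usual truncation-error sense: substitute a smooth exact solution $(\rho,\sigma)$ of \eqref{eq:agg_relax} — equivalently $(\mu,\nu)$ of \eqref{eq:munu} with $\mu=\sigma-c\rho$, $\nu=\sigma+c\rho$ — at the grid nodes, and show that the residual of each discrete equation in \eqref{eq:schema_rhosigma} tends to $0$ as $\Delta t,\Delta x\to 0$ at fixed $\eps$. The time update is a plain forward Euler step contributing an $O(\Delta t)$ error, so the entire difficulty lies in identifying the numerical fluxes $\sigma_{j\pm\frac12}$. Rather than Taylor-expanding the explicit coefficients \eqref{eq:cLcR}, which behave like $\eps c^2/\Delta x$ and force delicate cancellations in \eqref{eq:sigma12}, I would work directly from the structural definition of the fluxes through the stationary problems \eqref{eq:statbar}--\eqref{eq:stattilde}, which keeps the stiff $1/\eps$ factor under control.

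The first step is to exploit the equivalent form \eqref{dis:wb} of the scheme together with the frozen-velocity (hence linear) stationary systems. Integrating \eqref{eq:tildemu} over $(x_j,x_{j+1})$ and \eqref{eq:barnu} over $(x_{j-1},x_j)$ and using the incoming boundary data \eqref{cond:stattilde}, \eqref{cond:statbar} gives the exact identities
\[
\tilde\mu(x_j)=\mu_{j+1}^n+\frac{1}{c\eps}\int_{x_j}^{x_{j+1}}H(\tilde\mu,\tilde\nu)\,\ud x,\qquad \bar\nu(x_j)=\nu_{j-1}^n+\frac{1}{c\eps}\int_{x_{j-1}}^{x_j}H(\bar\mu,\bar\nu)\,\ud x.
\]
The second step is a uniform estimate on the cell: since $\eps$ is fixed, the linear stationary profiles vary by only $O(\Delta x/\eps)=O(\Delta x)$ across a cell of width $\Delta x$, so $\tilde\mu,\tilde\nu$ (resp. $\bar\mu,\bar\nu$) stay within $O(\Delta x)$ of $\mu(x_j),\nu(x_j)$, and the frozen velocities $a^n_{j\pm\frac12,L/R}$ converge to $a[\rho](x_j)$. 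Hence $\frac{1}{c\eps\Delta x}\int_{\mathrm{cell}}H\to \frac{1}{c\eps}\big(a[\rho]\rho-\sigma\big)(x_j)$. Feeding these into $\sigma_{j+\frac12}=\frac12\big(\tilde\mu(x_j)+\nu_j^n\big)$ and $\sigma_{j-\frac12}=\frac12\big(\mu_j^n+\bar\nu(x_j)\big)$, which come from \eqref{eq:munusigma}, and Taylor-expanding the smooth nodal values, I obtain
\[
\sigma_{j\pm\frac12}=\sigma(x_j)\pm\frac{\Delta x}{2}\pa_x\sigma(x_j)-\frac{c\Delta x}{2}\pa_x\rho(x_j)+\frac{\Delta x}{2c\eps}\big(a[\rho]\rho-\sigma\big)(x_j)+O(\Delta x^2).
\]

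With this expansion the two consistencies follow by inspection. In the difference $\sigma_{j+\frac12}-\sigma_{j-\frac12}$ the $\pa_x\rho$ and source contributions cancel and only $\Delta x\,\pa_x\sigma(x_j)+O(\Delta x^2)$ survives, so \eqref{eq:schema_rho} is consistent with $\pa_t\rho+\pa_x\sigma=0$. In the combination $2\sigma_j^n-\sigma_{j+\frac12}-\sigma_{j-\frac12}$ the $\pa_x\sigma$ terms cancel while the others add up to $c\Delta x\,\pa_x\rho(x_j)-\frac{\Delta x}{c\eps}\big(a[\rho]\rho-\sigma\big)(x_j)+O(\Delta x^2)$; multiplying by $c/\Delta x$ recovers exactly $c^2\pa_x\rho-\frac1\eps\big(a[\rho]\rho-\sigma\big)$, so \eqref{eq:schema_sigma} is consistent with \eqref{eq:p}. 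Together with the $O(\Delta t)$ time error this gives first-order consistency, and by the equivalence already established between \eqref{dis:wb} and \eqref{eq:schema_rhosigma} the statement follows.

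The main obstacle is the uniform cell estimate of the second paragraph: one must control the explicit stationary profiles built from \eqref{eq:rhoRrhoL}--\eqref{eq:sigma12} uniformly in $j$ as $\Delta x\to 0$ at fixed $\eps$, and show that the nonlocal velocities $a^n_{j\pm\frac12,L/R}=-\sum_{k\neq j}W'(x_j-x_k)\rho_{k\pm\frac12,L/R}$ are consistent quadratures of the convolution $a[\rho]=-W'*\rho$; this is where the hypothesis on $a^n_{j\pm\frac12,L/R}$ and the regularity of $W$ enter. Once these uniform bounds are secured the remaining Taylor expansions are routine. It is worth stressing that the argument is genuinely limited to fixed $\eps$, since the $O(\Delta x/\eps)$ controls degenerate in the regime $\Delta x\gg\eps$, consistently with the lemma being stated for each fixed $\eps>0$.
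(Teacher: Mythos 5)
Your proposal is correct and lands on exactly the same expansion of the interface fluxes as the paper, namely $\sigma_{j\pm\frac12}=\sigma(x_j)\pm\frac{\Delta x}{2}\pa_x\sigma-\frac{c\Delta x}{2}\pa_x\rho+\frac{\Delta x}{2c\eps}(a[\rho]\rho-\sigma)+O(\Delta x^2)$, and the two cancellations you then perform (the source and $\pa_x\rho$ terms cancelling in $\sigma_{j+\frac12}-\sigma_{j-\frac12}$, the $\pa_x\sigma$ terms cancelling in $2\sigma_j^n-\sigma_{j+\frac12}-\sigma_{j-\frac12}$) are precisely how the paper concludes. The route to that expansion differs: the paper Taylor-expands the explicit coefficients $\kappa_{j\pm\frac12,L/R}^n/(c-\kappa^n_{j\pm\frac12,R}+\kappa^n_{j\pm\frac12,L})$ from \eqref{eq:cLcR}--\eqref{eq:sigma12} to order $\Delta x$, whereas you integrate the stationary ODEs \eqref{eq:barnu}, \eqref{eq:tildemu} over a cell to get the exact identities $\tilde\mu(x_j)=\mu_{j+1}^n+\frac{1}{c\eps}\int H$ and $\bar\nu(x_j)=\nu_{j-1}^n+\frac{1}{c\eps}\int H$ and then estimate the integral. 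Your version is more structural and makes transparent why the stiff $1/\eps$ term appears with the right sign and coefficient, at the price of requiring the uniform cell estimate on the stationary profiles that you correctly flag as the main remaining work; the paper's version is more computational but self-contained, since the needed bounds are read off the explicit formulas. One point where the paper is sharper and you are vaguer: you assert that each frozen velocity $a^n_{j\pm\frac12,L/R}$ converges to $a[\rho](x_j)$, whereas only the sum $a^n_{j\pm\frac12,L}+a^n_{j\pm\frac12,R}$ enters the leading-order term, and the paper proves consistency of exactly that sum via the identity $\rho_{j-\frac12,L}+\rho_{j-\frac12,R}=\frac{\sigma_{j-1}^n-\sigma_j^n}{c}+\rho_{j-1}^n+\rho_j^n$, which avoids having to control $\rho_{j-\frac12,L}$ and $\rho_{j-\frac12,R}$ separately. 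If you carry out your uniform cell estimate, you should either establish the individual limits or, more economically, adopt this identity for the sum.
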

\begin{proof}
For $j \in \ZZ$, one has, using Taylor expansions as $\Delta x \to 0$,
\begin{align*}
& \frac{\kappa_{j-\frac 12, L}^n}{c - \kappa_{j-\frac 12, R}^n + \kappa_{j-\frac 12, L}^n} = \frac 12 - \frac{1}{4\eps c^2} \left(c - \frac{a_{j-\frac 12, L}^n + a_{j-\frac 12, R}^n}{2}\right) \Delta x + O(\Delta x^2), \\
& \frac{\kappa_{j-\frac 12, R}^n}{c - \kappa_{j-\frac 12, R}^n + \kappa_{j-\frac 12, L}^n} = - \frac 12 + \frac{1}{4\eps c^2} \left(c + \frac{a_{j-\frac 12, L}^n + a_{j-\frac 12, R}^n}{2}\right) \Delta x + O(\Delta x^2).
\end{align*}
Thus,
\begin{align*}
\sigma_{j-\frac 12} = \frac{\sigma_{j-1}^n + \sigma_j^n}{2} + c \frac{\rho_{j-1}^n - \rho_j^n}{2} & - \frac{1}{4\eps c^2} \Bigg( \left(c - \frac{a_{j-\frac 12, L}^n + a_{j-\frac 12, R}^n}{2}\right) (\sigma_{j-1}^n + c \rho_{j-1}^n) \\
& + \left(c + \frac{a_{j-\frac 12, L}^n + a_{j-\frac 12, R}^n}{2}\right) (\sigma_j^n - c\rho_j^n) \Bigg) \Delta x + O(\Delta x^2).
\end{align*}
In particular, $\sigma_{j-\frac 12}$ is clearly consistent with $\sigma(t^n, x_{j-\frac 12})$ as long as the solution $(\rho, \sigma)$ is smooth enough to perform standard consistency analysis for finite differences. This shows that \eqref{eq:schema_rho} is consistent with $\pa_t \rho + \pa_x \sigma = 0$. As for the consistency of \eqref{eq:schema_sigma} with $\pa_t \sigma + c^2 \pa_x \rho = \frac{1}{\eps} (a[\rho] \rho - \sigma)$, we write:
\begin{align*}
& \sigma_{j+\frac 12} + \sigma_{j-\frac 12} - 2\sigma_j^n = \frac{\sigma_{j+1}^n - 2\sigma_j^n + \sigma_{j-1}^n}{2} + c \frac{\rho_{j-1}^n - \rho_{j+1}^n}{2} - \frac{ \Delta x}{4\eps c^2} \Bigg[ c (\sigma_{j-1}^n + 2\sigma_j^n + \sigma_{j+1}^n) \\
&\quad + \frac{a_{j-\frac 12, L}^n + a_{j-\frac 12, R}^n}{2} (\sigma_j^n - \sigma_{j-1}^n) + \frac{a_{j+\frac 12, L}^n + a_{j+\frac 12, R}^n}{2} (\sigma_{j+1}^n - \sigma_j^n) + c^2 (\rho_{j-1}^n - \rho_{j+1}^n) \\
&\quad  - c \Bigg( \frac{a_{j-\frac 12, L}^n + a_{j-\frac 12, R}^n}{2} \rho_{j-1}^n
 + \frac{a_{j-\frac 12, L}^n + a_{j-\frac 12, R}^n + a_{j+\frac 12, L}^n + a_{j+\frac 12, R}^n}{2} \rho_j^n + \frac{a_{j+\frac 12, L}^n + a_{j+\frac 12, R}^n}{2} \rho_{j+1}^n \Bigg) \Bigg] + O(\Delta x^2).
\end{align*}
Using Taylor expansions, we have, for smooth solutions $\sigma(t^n, x_{j+1}) - 2\sigma(t^n, x_j) + \sigma(t^n, x_{j-1}) = O(\Delta x^2)$, $\rho(t^n, x_{j-1}) - \rho(t^n, x_{j+1}) = O(\Delta x)$, $\sigma(t^n, x_j) - \sigma(t^n, x_{j-1}) = O(\Delta x)$ and $\sigma(t^n, x_{j+1}) - \sigma(t^n, x_j) = O(\Delta x)$. Along with the bound $|a_{j\pm\frac 12, L/R}^n| \leq a_\infty$, this implies:
\begin{align*}
\sigma_{j+\frac 12} + \sigma_{j-\frac 12} - 2\sigma_j^n & = c \frac{\rho_{j-1}^n - \rho_{j+1}^n}{2} - \frac{1}{4\eps c^2} \Bigg[ c (\sigma_{j-1}^n + 2\sigma_j^n + \sigma_{j+1}^n)  - c \Bigg( \frac{a_{j-\frac 12, L}^n + a_{j-\frac 12, R}^n}{2} \rho_{j-1}^n \\
& + \frac{a_{j-\frac 12, L}^n + a_{j-\frac 12, R}^n + a_{j+\frac 12, L}^n + a_{j+\frac 12, R}^n}{2} \rho_j^n + \frac{a_{j+\frac 12, L}^n + a_{j+\frac 12, R}^n}{2} \rho_{j+1}^n \Bigg) \Bigg] \Delta x + O(\Delta x^2).
\end{align*}
Clearly, $c \frac{\rho_{j-1}^n - \rho_{j+1}^n}{2}$ and $c (\sigma_{j-1}^n + 2\sigma_j^n + \sigma_{j+1}^n)$ are consistent with accuracy $O(\Delta x^2)$ and $O(\Delta x)$ respectively with $-c \pa_x \rho(t^n, x_j)$ and $4c \sigma(t^n, x_j)$. For the remaining terms, let us recall that, with the notations of \eqref{eq:rhobar}:
$$
\rho_{j-\frac 12, L} = \frac{\nu_{j-1}^n-\bar{\mu}(0)}{2c} = \frac{\nu_{j-1}^n - \sigma_{j-\frac 12}}{c}, \qquad
\rho_{j-\frac 12, R} = \frac{\bar{\nu}(\Delta x) - \mu_{j}^n}{2c} = \frac{\sigma_{j-\frac 12} - \mu_j}{c}.
$$
Hence $\rho_{j-\frac 12, L} + \rho_{j-\frac 12, R} = \dfrac{\nu_{j-1}^n - \mu_j^n}{c} = \dfrac{\sigma_{j-1}^n - \sigma_j^n}{c} + \rho_{j-1}^n + \rho_j^n$. Since $\sigma(t^n, x_{j-1}) - \sigma(t^n, x_j) = O(\Delta x)$, and assuming that:
$$
a_{j-\frac 12, L/R}^n = - \ds \sum_{k \neq j} W'(x_j - x_k) \rho_{k-\frac 12, L/R}
$$
we deduce that $a_{j-\frac 12, L}^n + a_{j-\frac 12, R}^n $ is consistent with $a[\rho(t^n)](x_{j-1}) + a[\rho(t^n)](x_j)$ with accuracy $O(\Delta x)$. It follows that $\sigma_{j+\frac 12} + \sigma_{j-\frac 12} - 2\sigma_j^n$ is consistent with $-\pa_x \rho(t^n, x_j) - \frac{1}{\eps} \Big(\sigma(t^n, x_j) - a[\rho(t^n)](x_j) \rho(t^n, x_j)\Big)$, again with accuracy $O(\Delta x)$, and this concludes the proof.
\end{proof}
The stability conditions in Lemma \ref{lemma:stabL1} are independent on $\eps$, we recover in the limit $\eps \to 0$, using \eqref{eq:limkappa}, the scheme of \cite{Gosse(2016)}:
\begin{subequations}\label{dis:GV}
\begin{align}
  & \rho_j^{n+1} = \rho_j^n - \frac{\Delta t}{\Delta x} \left( \frac{\nu_j^n (a_{j+\frac 12, L}^n)_+ + \mu_{j+1}^n (a_{j+\frac 12, R}^n)_-}{c+(a_{j+\frac 12, R}^n)_- +(a_{j+\frac 12, L}^n)_+} - \frac{\nu_{j-1}^n (a_{j-\frac 12, L}^n)_+ + \mu_{j}^n (a_{j-\frac 12, R}^n)_-}{c + (a_{j-\frac 12, R}^n)_- + (a_{j-\frac 12, L}^n)_+} \right) \label{dis:GVrho}  \\
  & \sigma_j^{n+1} = \sigma_j^n - c \frac{\Delta t}{\Delta x} \left( 2\sigma_j^n - \frac{\nu_j^n (a_{j+\frac 12, L}^n)_+ + \mu_{j+1}^n (a_{j+\frac 12, R}^n)_-}{c+(a_{j+\frac 12, R}^n)_- +(a_{j+\frac 12, L}^n)_+} - \frac{\nu_{j-1}^n (a_{j-\frac 12, L}^n)_+ + \mu_{j}^n (a_{j-\frac 12, R}^n)_-}{c + (a_{j-\frac 12, R}^n)_- + (a_{j-\frac 12, L}^n)_+} \right), \label{dis:GVsigma}
\end{align}
\end{subequations}
which is stable under the conditions $\frac{c \Delta t}{\Delta x} \leq 1$ and $c \geq a_\infty$.
Notice that with the notation in \eqref{eq:sigma12}, equation \eqref{dis:GVrho} may be rewritten
$$
\rho_j^{n+1} = \rho_j^n - \frac{\Delta t}{\Delta x} \left( \rho_{j+\frac 12,L}^n (a_{j+\frac 12,L}^n)_+ - \rho_{j+\frac 12,R}^n (a_{j+\frac 12,R}^n)_- -  \rho_{j-\frac 12,L}^n (a_{j-\frac 12,L}^n)^+ + \rho_{j-\frac 12,R}^n (a_{j-\frac 12,R}^n)_-\right).
$$

\section{Numerical experiments}\label{sec:numresult}

We present some numerical illustrations for the two schemes described in the previous section. In addition to the potential $W(x) = \frac{|x|}{2}$, we also consider the smooth potential $W(x) = \frac{x^2}{2}$.

Numerical tests are conducted on the domain $[-1,1]$ with the inital data $\rho_0 = \frac{1}{2} \delta_{-0.5} + \frac{1}{2} \delta_{0.5}$, $\sigma_0 = a[\rho_0] \rho_0$ and both schemes are initialized with
$$
\rho_j^0 = \dfrac{1}{\Delta x} \rho_0(C_j), \qquad \sigma_j^0 = \dfrac{1}{\Delta x} \sigma_0(C_j).
$$
Figure \ref{fig:1} shows that both schemes recover the correct dynamics in the limit $\eps \to 0$: for the potential $W(x) = \frac{|x|}{2}$, one can compute the exact velocity of both Dirac masses for the aggregation equation \eqref{eq:aggreg} and see that they should be located respectively in $x = -0.2$ and $x = 0.2$ in final time $T = 1.2$.

This test is set up with $\eps = 10^{-7}$, on a cartesian mesh of $[-1, 1]$ with 1500 cells, $c = 1$ and the CFL $c \dfrac{\Delta t}{\Delta x} = 0.9$. Both schemes \eqref{dis:rusanov} and \eqref{dis:GV} display the correct velocity for the Dirac masses, but one can notice that the Rusanov scheme \eqref{dis:rusanov} shows more numerical diffusion. Note that both schemes being written in conservation form, they preserve the total mass of $\rho$, which is also verified numerically.
\begin{figure}[h]
\centering
\includegraphics[scale = 0.5]{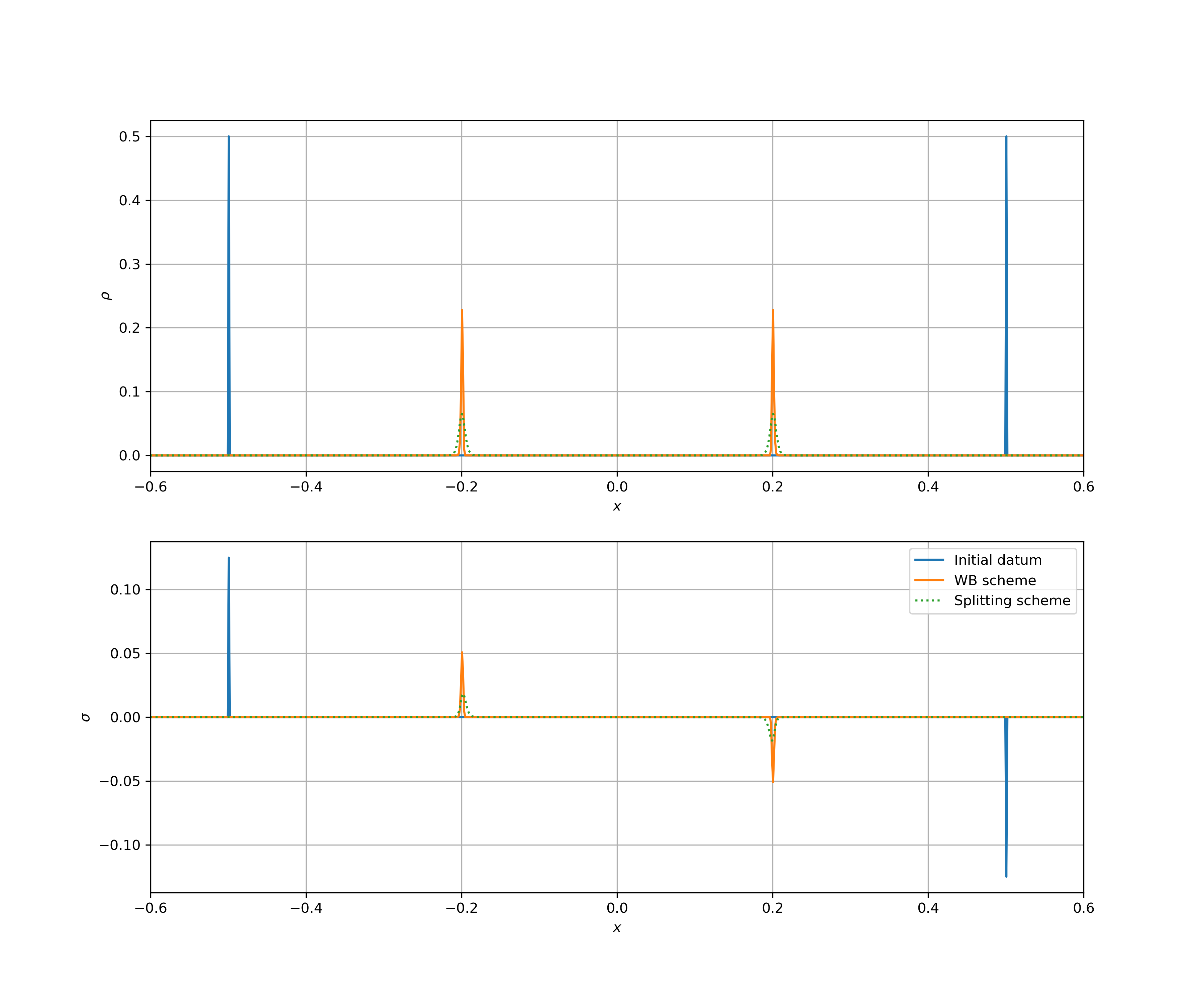}
\caption{Dynamics of two Dirac masses for the potential $W(x) = \frac{|x|}{2}$ in time $T = 1.2$.}
\label{fig:1}
\end{figure}

We then investigate the order of convergence when $\Delta x$ goes to 0 with $\eps$ fixed, in Wasserstein distance $W_1$ (the numerical results are the same for $W_2$).

After performing tests for several values of $\eps$, it appears that the convergence rate does not depend on the size of $\eps$. Therefore, as an example, we propose simulations in final time $T = 0.5$, with the same intial data and stability parameters as above, and with $\eps = 2 \times 10^{-6}$ for Figure \ref{fig:2} and with $\eps = 10^{-2}$ for Figure \ref{fig:3}.

For a fixed value of $\eps$, both schemes seem to converge with order $1/2$ with respect to $\Delta x$ for the smooth potential $W(x) = \frac{x^2}{2}$ (see Figure \ref{fig:2}) whereas they seem of order 1 for the potential $W(x) = \frac{|x|}{2}$ (see Figure \ref{fig:3}). This can be explained as both schemes possess some numerical diffusion which is somehow counterbalanced by the aggregation phenomenon in the case of a pointy potential, as already observed in \cite{Fabreges(2019)}. Due to the link with the Burgers equation, this superconvergence phenomenon is directly linked to the results of Després \cite{Despres} which should be rigorously extended to our case (the mere extension to the upwind scheme of \cite{Carrillo(2016)} for the aggregation is not straightforward).
\begin{figure}[h]
\centering
\includegraphics[scale = 0.5]{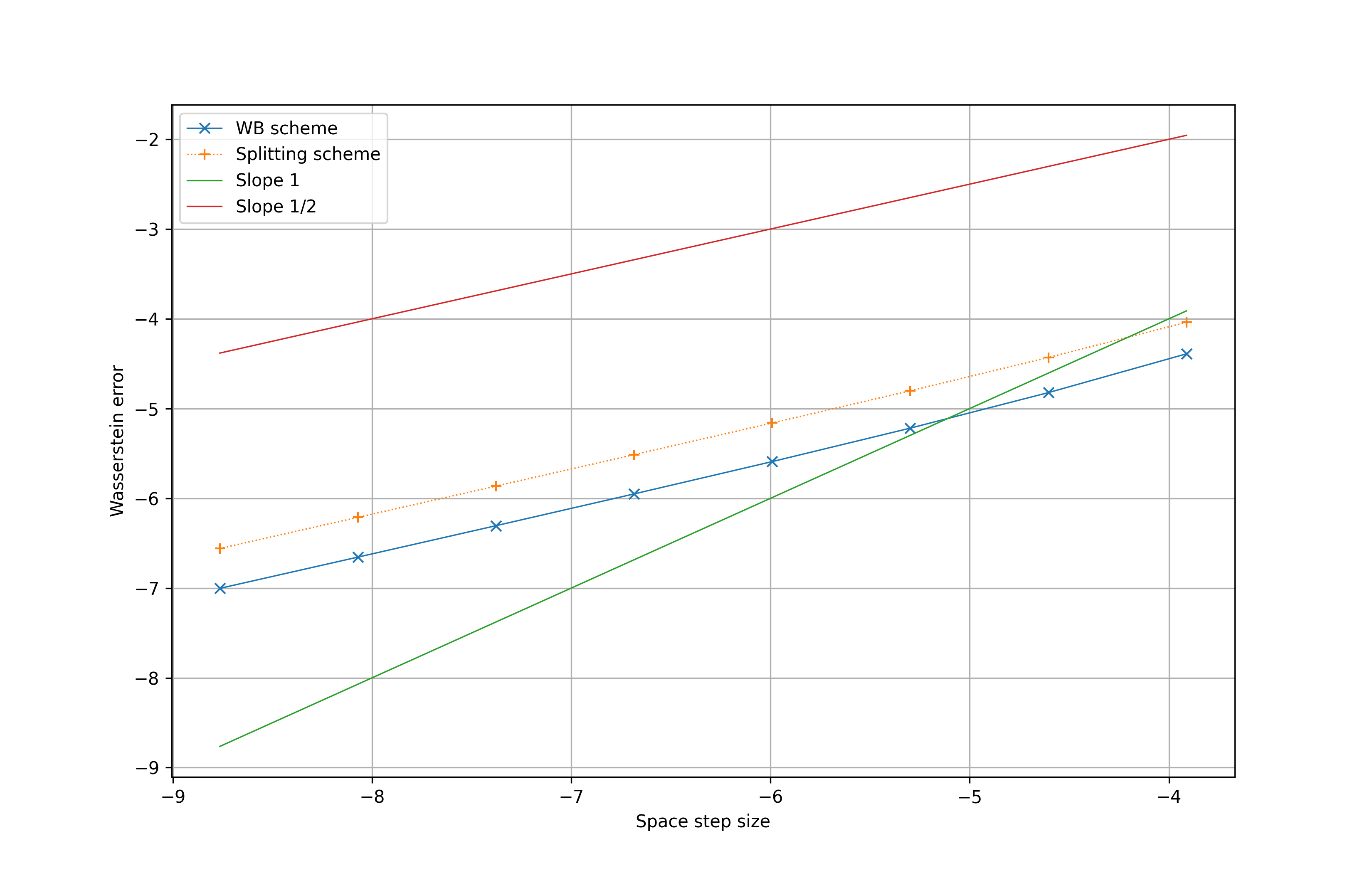}
\caption{Order of convergence of the splitting scheme and the well-balanced scheme for the smooth potential $W(x) = \frac{x^2}{2}$.}
\label{fig:2}
\end{figure}
\begin{figure}[h]
\centering
\includegraphics[scale = 0.5]{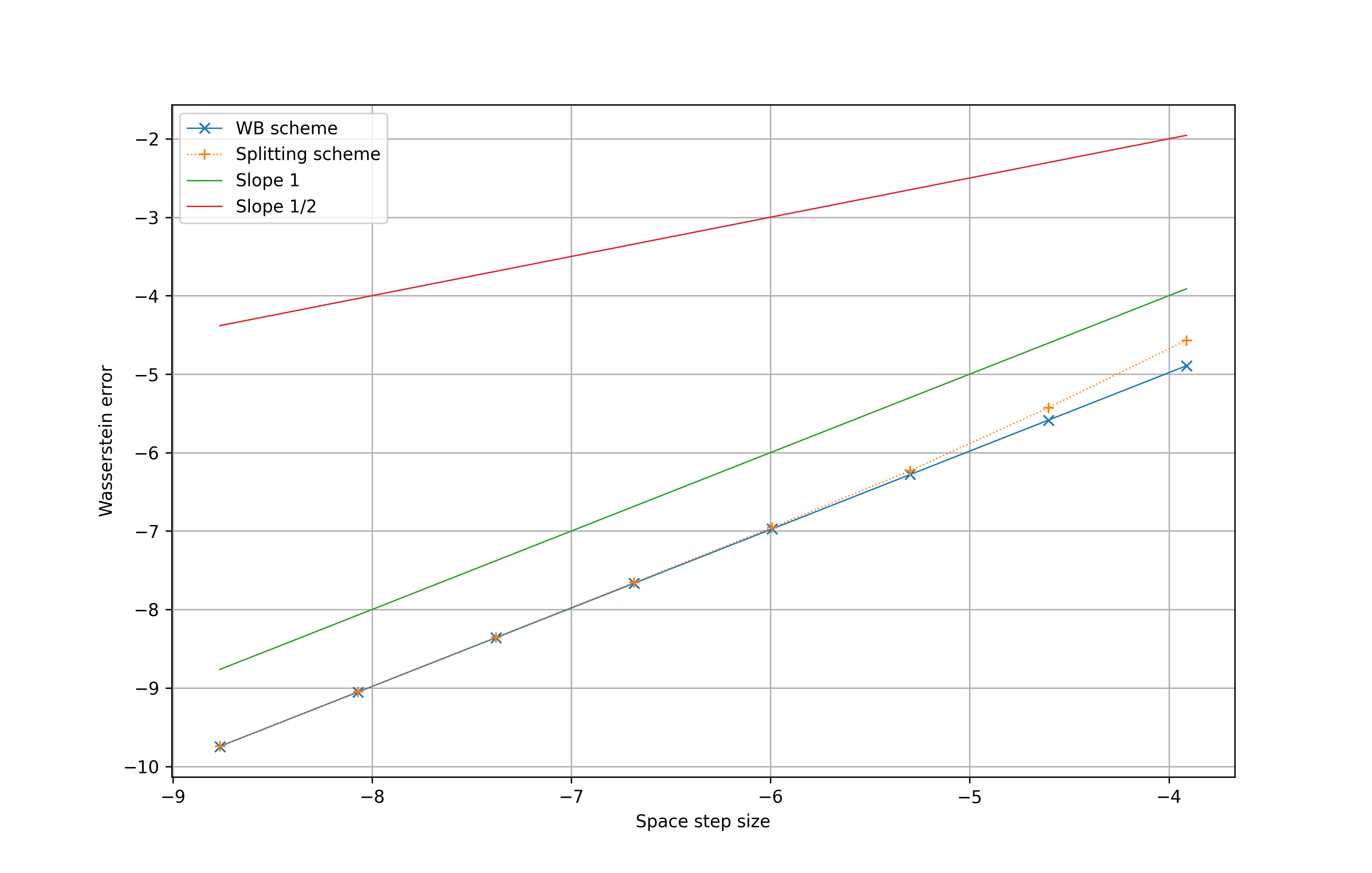}
\caption{Order of convergence of the splitting scheme and the well-balanced scheme for the pointy potential $W(x) = \frac{|x|}{2}$.}
\label{fig:3}
\end{figure}
Finally, we also verify the well-balanced property of the scheme
\eqref{wb:rhosigma} by computing the $W_1$ distance between the
approximated solution at time $T=0.5$ and the stationary solution
of \eqref{eq:agg_relax} given by:
$$
\rho(t,x) = \rho_0(x) := \frac{1}{8\eps c^2}\Bigg(1 - \tanh^2\Big(\frac{x}{4\eps c^2}\Big)\Bigg).
$$
The test is conducted with $\eps = 2 \times 10^{-4}$, with the exact boundary conditions given by the above formula, and for several values of $\Delta x$.
As we show in Figure \ref{fig:4}, the scheme
\eqref{wb:rhosigma} preserves well the above equilibrium for any
$\Delta x$ (although we have replaced the resolution of the systems \eqref{eq:statbar} and \eqref{eq:stattilde} with linear systems, see \eqref{eq:statLR}),
while, for the splitting scheme, we recover the linear
convergence towards $\rho_0$ which is, in this case, the exact solution.
\begin{figure}[h]
\centering
\includegraphics[scale = 0.75]{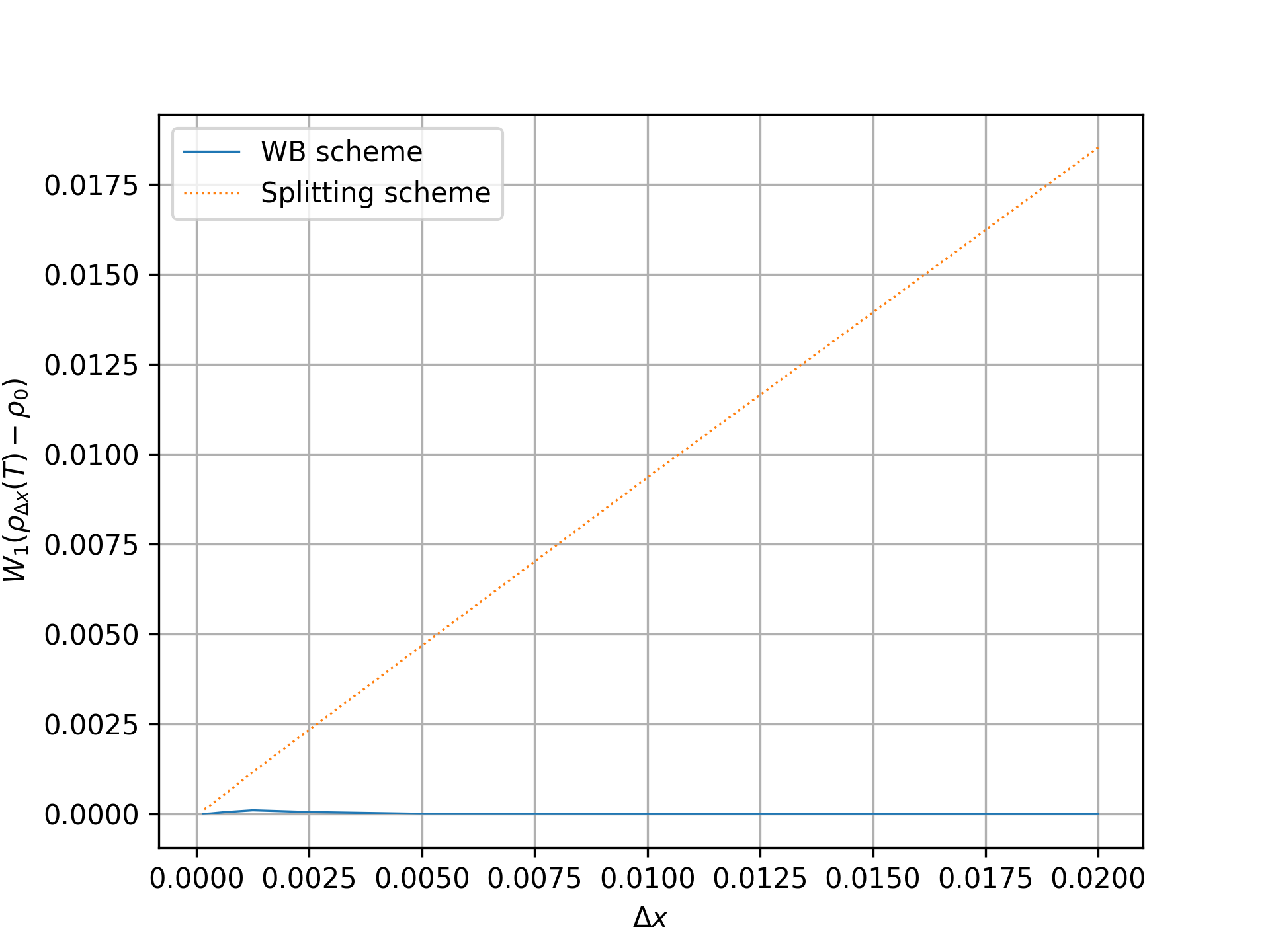}
\caption{Distance to the equilibrium for the splitting scheme and the well-balanced scheme and for the pointy potential $W(x) = \frac{|x|}{2}$.}
\label{fig:4}
\end{figure}

  \bibliographystyle{plain}
\bibliography{Relaxation_limit.bib}

\begin{thebibliography}{-------}
\providecommand{\natexlab}[1]{#1}

\bibitem[{Morale} \em{et~al.}(2005){Morale}, {Capasso}, and
  {Oelschl\"ager}]{Morale}
{Morale}, D.; {Capasso}, V.; {Oelschl\"ager}, K.
\newblock {An interacting particle system modelling aggregation behavior: from
  individuals to populations}.
\newblock {\em {J. Math. Biol.}} {\bf 2005}, {\em 50},~49--66.

\bibitem[{Burger} and {Di Francesco}(2008)]{Burger}
{Burger}, M.; {Di Francesco}, M.
\newblock {Large time behavior of nonlocal aggregation models with nonlinear
  diffusion}.
\newblock {\em {Netw. Heterog. Media}} {\bf 2008}, {\em 3},~749--785.

\bibitem[{Burger} \em{et~al.}(2007){Burger}, {Capasso}, and
  {Morale}]{BurgerMorale}
{Burger}, M.; {Capasso}, V.; {Morale}, D.
\newblock {On an aggregation model with long and short range interactions}.
\newblock {\em {Nonlinear Anal., Real World Appl.}} {\bf 2007}, {\em
  8},~939--958.

\bibitem[{Topaz} \em{et~al.}(2006){Topaz}, {Bertozzi}, and
  {Lewis}]{TopazBertozzi1}
{Topaz}, C.M.; {Bertozzi}, A.L.; {Lewis}, M.A.
\newblock {A nonlocal continuum model for biological aggregation}.
\newblock {\em {Bull. Math. Biol.}} {\bf 2006}, {\em 68},~1601--1623.

\bibitem[{Topaz} and {Bertozzi}(2004)]{TopazBertozzi2}
{Topaz}, C.M.; {Bertozzi}, A.L.
\newblock {Swarming patterns in a two-dimensional kinematic model for
  biological groups}.
\newblock {\em {SIAM J. Appl. Math.}} {\bf 2004}, {\em 65},~152--174.

\bibitem[{Dolak} and {Schmeiser}(2005)]{DolakSchmeiser}
{Dolak}, Y.; {Schmeiser}, C.
\newblock {Kinetic models for chemotaxis: Hydrodynamic limits and
  spatio-temporal mechanisms}.
\newblock {\em {J. Math. Biol.}} {\bf 2005}, {\em 51},~595--615.

\bibitem[{James} and {Vauchelet}(2013)]{JamesVauchelet}
{James}, F.; {Vauchelet}, N.
\newblock {Chemotaxis: from kinetic equations to aggregate dynamics}.
\newblock {\em {NoDEA, Nonlinear Differ. Equ. Appl.}} {\bf 2013}, {\em
  20},~101--127.

\bibitem[{Bertozzi} and {Brandman}(2010)]{Andreaaaa}
{Bertozzi}, A.L.; {Brandman}, J.
\newblock {Finite-time blow-up of \(L^\infty \)-weak solutions of an
  aggregation equation}.
\newblock {\em {Commun. Math. Sci.}} {\bf 2010}, {\em 8},~45--65.

\bibitem[{Bertozzi} \em{et~al.}(2009){Bertozzi}, {Carrillo}, and
  {Laurent}]{Andreaaaaaa}
{Bertozzi}, A.L.; {Carrillo}, J.A.; {Laurent}, T.
\newblock {Blow-up in multidimensional aggregation equations with mildly
  singular interaction kernels}.
\newblock {\em {Nonlinearity}} {\bf 2009}, {\em 22},~683--710.

\bibitem[{Carrillo} \em{et~al.}(2011){Carrillo}, {Difrancesco}, {Figalli},
  {Laurent}, and {Slep\v{c}ev}]{Carrillo1}
{Carrillo}, J.A.; {Difrancesco}, M.; {Figalli}, A.; {Laurent}, T.;
  {Slep\v{c}ev}, D.
\newblock {Global-in-time weak measure solutions and finite-time aggregation
  for nonlocal interaction equations}.
\newblock {\em {Duke Math. J.}} {\bf 2011}, {\em 156},~229--271.

\bibitem[Carrillo \em{et~al.}(2016)Carrillo, James, Lagoutière, and
  Vauchelet]{Carrillo(2016)}
Carrillo, J.A.; James, F.; Lagoutière, F.; Vauchelet, N.
\newblock The {Filippov} characteristic flow for the aggregation equation with
  mildly singular potentials.
\newblock {\em J. Differ. Equations} {\bf 2016}, {\em 260},~304--338.

\bibitem[{Jin} and {Xin}(1995)]{JinXin}
{Jin}, S.; {Xin}, Z.
\newblock {The relaxation schemes for systems of conservation laws in arbitrary
  space dimensions}.
\newblock {\em {Commun. Pure Appl. Math.}} {\bf 1995}, {\em 48},~235--276.

\bibitem[{James} and {Vauchelet}(2015)]{JV_sinum}
{James}, F.; {Vauchelet}, N.
\newblock {Numerical methods for one-dimensional aggregation equations}.
\newblock {\em {SIAM J. Numer. Anal.}} {\bf 2015}, {\em 53},~895--916.

\bibitem[{Bonaschi} \em{et~al.}(2015){Bonaschi}, {Carrillo}, {Di Francesco},
  and {Peletier}]{Bonaschi}
{Bonaschi}, G.A.; {Carrillo}, J.A.; {Di Francesco}, M.; {Peletier}, M.A.
\newblock {Equivalence of gradient flows and entropy solutions for singular
  nonlocal interaction equations in 1D}.
\newblock {\em {ESAIM, Control Optim. Calc. Var.}} {\bf 2015}, {\em
  21},~414--441.

\bibitem[{James} and {Vauchelet}(2016)]{JV_dcds}
{James}, F.; {Vauchelet}, N.
\newblock {Equivalence between duality and gradient flow solutions for
  one-dimensional aggregation equations}.
\newblock {\em {Discrete Contin. Dyn. Syst.}} {\bf 2016}, {\em 36},~1355--1382.

\bibitem[Katsoulakis and Tzavaras(1997)]{Katsoulakis(1997)}
Katsoulakis, M.A.; Tzavaras, A.E.
\newblock Contractive relaxation systems and the scalar multidimensional
  conservation law.
\newblock {\em Communications in Partial Differential Equations} {\bf 1997},
  {\em 22},~225--267.

\bibitem[{Jin}(1999)]{ShiJin}
{Jin}, S.
\newblock {Efficient asymptotic-preserving (AP) schemes for some multiscale
  kinetic equations}.
\newblock {\em {SIAM J. Sci. Comput.}} {\bf 1999}, {\em 21},~441--454.

\bibitem[{Carrillo} \em{et~al.}(2015){Carrillo}, {Chertock}, and
  {Huang}]{Chertock}
{Carrillo}, J.A.; {Chertock}, A.; {Huang}, Y.
\newblock {A finite-volume method for nonlinear nonlocal equations with a
  gradient flow structure}.
\newblock {\em {Commun. Comput. Phys.}} {\bf 2015}, {\em 17},~233--258.

\bibitem[{Craig} and {Bertozzi}(2016)]{Craig}
{Craig}, K.; {Bertozzi}, A.L.
\newblock {A blob method for the aggregation equation}.
\newblock {\em {Math. Comput.}} {\bf 2016}, {\em 85},~1681--1717.

\bibitem[Gosse and Vauchelet(2016)]{Gosse(2016)}
Gosse, L.; Vauchelet, N.
\newblock Numerical {High}-{Field} {Limits} in {Two}-{Stream} {Kinetic}
  {Models} and {1D} {Aggregation} {Equations}.
\newblock {\em SIAM J. Sci. Comput.} {\bf 2016}, {\em 38},~A412--A434.

\bibitem[Fabrèges \em{et~al.}(2019)Fabrèges, Hivert, Le~Balc'h, Martel,
  Delarue, Lagoutière, and Vauchelet]{Fabreges(2019)}
Fabrèges, B.; Hivert, H.; Le~Balc'h, K.; Martel, S.; Delarue, F.; Lagoutière,
  F.; Vauchelet, N.
\newblock Numerical schemes for the aggregation equation with pointy
  potentials.
\newblock {\em ESAIM: Proceedings and Surveys} {\bf 2019}.

\bibitem[{Carrillo} \em{et~al.}(2021){Carrillo}, {Fjordholm}, and
  {Solem}]{secondorder}
{Carrillo}, J.A.; {Fjordholm}, U.S.; {Solem}, S.
\newblock {A second-order numerical method for the aggregation equations}.
\newblock {\em {Math. Comput.}} {\bf 2021}, {\em 90},~103--139.

\bibitem[{Gosse}(2013)]{Gosse_book}
{Gosse}, L.
\newblock {\em {Computing qualitatively correct approximations of balance laws.
  Exponential-fit, well-balanced and asymptotic-preserving}}; Vol.~2, Milano:
  Springer,  2013; pp. xix + 340.

\bibitem[{Villani}(2003)]{Villani}
{Villani}, C.
\newblock {\em {Topics in optimal transportation}}; Vol.~58, Providence, RI:
  American Mathematical Society (AMS),  2003; pp. xvi + 370~p.

\bibitem[{Santambrogio}(2015)]{Santambrogio}
{Santambrogio}, F.
\newblock {\em {Optimal transport for applied mathematicians. Calculus of
  variations, PDEs, and modeling}}; Vol.~87, Cham: Birkh\"auser/Springer,
  2015; pp. xxvii + 353.

\bibitem[Vallender(1974)]{Vallender(1974)}
Vallender, S.S.
\newblock Calculation of the {Wasserstein} {Distance} {Between} {Probability}
  {Distributions} on the {Line}.
\newblock {\em Theory Probab. Appl.} {\bf 1974}, {\em 18},~784--786.

\bibitem[{Rachev} and {R\"uschendorf}(1998)]{Rachev}
{Rachev}, S.T.; {R\"uschendorf}, L.
\newblock {\em {Mass transportation problems. Vol. 1: Theory. Vol. 2:
  Applications}}; New York, NY: Springer,  1998; pp. xxv + 430.

\bibitem[Natalini(1996)]{Natalini(1996)}
Natalini, R.
\newblock Convergence to equilibrium for the relaxation approximations of
  conservation laws.
\newblock {\em Communications on Pure and Applied Mathematics} {\bf 1996}, {\em
  49},~795--823.

\bibitem[Serre(1999)]{Serre(1999)}
Serre, D.
\newblock {\em Systems of {Conservation} {Laws} 1: {Hyperbolicity},
  {Entropies}, {Shock} {Waves}}; Cambridge University Press: Cambridge ; New
  York,  1999.

\bibitem[Bouchut and Perthame(1998)]{Bouchut(1998)}
Bouchut, F.; Perthame, B.
\newblock Kružkov's {Estimates} for {Scalar} {Conservation} {Laws}
  {Revisited}.
\newblock {\em Transactions of the American Mathematical Society} {\bf 1998},
  {\em 350},~2847--2870.

\bibitem[Liu and Warnecke(2000)]{Liu(2000)}
Liu, H.; Warnecke, G.
\newblock Convergence Rates for Relaxation Schemes Approximating Conservation
  Laws.
\newblock {\em SIAM Journal on Numerical Analysis} {\bf 2000}, {\em
  37},~1316--1337.

\bibitem[{Delarue} \em{et~al.}(2020){Delarue}, {Lagouti\`ere}, and
  {Vauchelet}]{Delarue}
{Delarue}, F.; {Lagouti\`ere}, F.; {Vauchelet}, N.
\newblock {Convergence analysis of upwind type schemes for the aggregation
  equation with pointy potential}.
\newblock {\em {Ann. Henri Lebesgue}} {\bf 2020}, {\em 3},~217--260.

\bibitem[Despr{\'e}s(2004)]{Despres}
Despr{\'e}s, B.
\newblock Discrete Compressive Solutions of Scalar Conservation Laws.
\newblock {\em J. Hyper. Differential Equations} {\bf 2004}, {\em
  01},~493--520.

\end{thebibliography}


\end{document}